\newtheorem{theorem}{Theorem}[section]
\newtheorem*{theorem*}{Theorem}
\newtheorem{lemma}[theorem]{Lemma}
\newtheorem{corollary}[theorem]{Corollary}
\newtheorem{proposition}[theorem]{Proposition}
\theoremstyle{definition}
\newtheorem{definition}[theorem]{Definition}
\newtheorem{example}[theorem]{Example}
\newtheorem{remark}{Remark}[section]
\newtheorem{question}{Question}[section]
\newcommand{\R}{\mathbb{R}}
\def \b {\beta}
\def\Ric{\text{Ric}}
\def\a{\alpha}
\def\l{\lambda}
\def\R{\mathbb{R}}
\def\S{{\mathbb{S}}}
\def\CP{{\mathbb{CP}}}
\def\vp{\varphi}
\def\k{\kappa}
\def\id{\operatorname{id}}
\def\Ric{\operatorname{Ric}}
\def\tr{\operatorname{tr}}
\def\spn{\operatorname{span}}
\numberwithin{equation}{section}
\newcommand*\owedge{\mathpalette\@owedge\relax}
\newcommand*\@owedge[1]{%
  \mathbin{%
    \ooalign{%
      $#1\m@th\bigcirc$\cr
      \hidewidth$#1\m@th\wedge$\hidewidth\cr
    }%
  }%
}
\begin{document}

\title[Curvature Operator of the Second Kind]{New Sphere Theorems under Curvature Operator of the Second Kind}

\author[Xiaolong Li]{Xiaolong Li}\thanks{The author's research is partially supported by NSF-DMS \#2405257, NSF LEAPS-MPS \#2316659, and Simons Collaboration Grant \#962228}
\address{Department of Mathematics and Statistics, Auburn University, Auburn, AL, 36849, USA}
\email{xil0005@auburn.edu}

\subjclass[2020]{53C20, 53C21, 53C24, 53C55}

\keywords{Sphere theorems, Curvature operator of the second kind, Curvature pinching, Betti numbers, Positive isotropic curvature}

\begin{abstract}
We investigate Riemannian manifolds $(M^n,g)$ whose curvature operator of the second kind $\mathring{R}$ satisfies the condition
\begin{equation*}
\alpha^{-1} (\lambda_1 +\cdots +\lambda_{\alpha}) >  - \theta \bar{\lambda}, 
\end{equation*}
where $\lambda_1  \leq \cdots \leq \lambda_{(n-1)(n+2)/2}$ are the eigenvalues of $\mathring{R}$, $\bar{\lambda}$ is their average, and $\theta > -1$. 
Under such curvature conditions with optimal $\theta$ (depending on $n$ and $\alpha$), we prove differentiable sphere theorems and their rigidity results in dimensions three and four, a homological sphere theorem in higher dimensions, and a curvature characterization of K\"ahler space forms. These results generalize recent works corresponding to $\theta =0$ (namely $\a$-positivity of $\mathring{R}$) of Cao-Gursky-Tran, Nienhaus-Petersen-Wink, and the author. Moreover, examples are provided to demonstrate the sharpness of most results. 
\end{abstract}
\maketitle

\section{Introduction}

A central theme in geometry is to understand how curvature conditions determine the topology of the underlying space. Of great importance are the sphere theorems, which state that the underlying manifolds must be (up to homology, homeomorphism, or diffeomorphism) spherical space forms under suitable curvature conditions. 
For instance, the famous differentiable sphere theorem due to Brendle and Schoen \cite{BS09} asserts that a closed Riemannian manifold with strictly quarter-pinched sectional curvature is diffeomorphic to a spherical space form. Much earlier, Berger \cite{Berger60} and Klingenberg \cite{Klingenberg61} proved this result on the homeomorphism level. Another celebrated differentiable sphere theorem, proved using the Ricci flow by Hamilton \cite{Hamilton82} in dimension three, Hamilton \cite{Hamilton86} and Chen \cite{Chen91} in dimension four, and B\"ohm and Wilking \cite{BW08} in all higher dimensions, states that a closed Riemannian manifold with two-positive curvature operator is diffeomorphic to a spherical space form. On the homology level, this was proved much earlier by Meyer \cite{Meyer71} using the Bochner technique. We refer the reader to \cite{BS11}, \cite{Brendle10book}, \cite{NW10}, \cite{PW21}, and the references therein for more sphere theorems, their corresponding rigidity results, and further developments.

Recently, a new differentiable sphere theorem was proved under the condition of three-positive curvature operator of the second kind. More generally, it has been shown that
\begin{theorem}[\cite{CGT23}, \cite{Li21}, and \cite{NPW22}]\label{thm Nishikawa}
A closed Riemannian manifold with three-nonnegative curvature operator of the second kind is either flat or diffeomorphic to a spherical space form. 
\end{theorem}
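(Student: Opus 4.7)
The plan is to apply Hamilton's Ricci flow program, following the strategy used in the strictly three-positive case by Cao-Gursky-Tran, the author, and Nienhaus-Petersen-Wink. The key technical input is that the closed convex cone of three-nonnegative curvature operators of the second kind is preserved under the Ricci flow. This is a pointwise ODE check on algebraic curvature tensors: at a boundary configuration with $\lambda_1 + \lambda_2 + \lambda_3 = 0$, one must show that Hamilton's quadratic reaction term points inward. This reduces to a careful eigenvalue inequality on $\Sym^2_0(T_pM)$, exploiting the algebraic identity relating $\mathring{R}$ to the usual curvature operator and estimating the trilinear contribution on the span of the three smallest eigenvectors.

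The second key ingredient is an algebraic comparison: three-nonnegative $\mathring{R}$ implies a classical Ricci-flow-invariant condition with spherical consequences, most cleanly weakly PIC (nonnegative isotropic curvature) or the stronger weakly PIC2. I would prove this bridge by expanding the isotropic curvature quantity on an orthonormal four-frame in an eigenbasis of $\mathring{R}$ and bounding it from below by eigenvalue sums controlled by the three-nonnegativity hypothesis. With this comparison in hand, a standard dichotomy completes the argument: if three-nonnegativity is strict somewhere, Hamilton's strong maximum principle promotes it to strict three-positivity everywhere along the flow, the PIC comparison then yields strict PIC, and Brendle's convergence theorem produces a round limit metric, so $M$ is diffeomorphic to a spherical space form. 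Otherwise, the kernel of the preserved cone is nontrivial everywhere along the flow, and analyzing it via Hamilton's strong maximum principle together with the Berger holonomy classification forces a local de Rham splitting whose only three-nonnegative compact realizations are flat.

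The main obstacle I anticipate is the algebraic comparison linking three-nonnegative $\mathring{R}$ to a PIC-type condition, rather than any Ricci flow consideration. Three-nonnegativity controls only a specific partial eigenvalue sum of an operator on $\Sym^2_0(T_pM)$, whereas PIC is a specific sum of sectional curvatures on four-frames of $T_pM$; making the eigenvalue estimate sharp and carefully tracking the equality case so that the rigidity branch actually forces flatness (rather than leaving exotic nonflat models) is the delicate part of the argument.
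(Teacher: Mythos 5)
Your proposal departs from the actual route and contains a genuine gap at each of its three steps.

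First, you take as your ``key technical input'' that the closed convex cone of three-nonnegative curvature operators of the second kind is preserved by the Ricci flow. This is not known in the relevant generality: Ricci flow invariance of the cones $\mathcal{C}(\alpha,\theta)$ is precisely Question B of this paper, and it has only been verified in dimension three (Fluck and the author, \cite{FL24}); no results are known for $n\geq 4$. The proofs in \cite{CGT23}, \cite{Li21}, and \cite{NPW22} deliberately avoid ever flowing within the $\mathring{R}$-cone: they use a purely algebraic, pointwise implication to pass once and for all into a classical cone already known to be Ricci flow invariant, namely PIC1 (positive isotropic curvature of $M\times\mathbb{R}$).

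Second, your algebraic bridge targets weakly PIC, but nonnegative isotropic curvature alone is not strong enough to invoke Brendle's convergence theorem, which requires PIC1. What \cite{CGT23} and \cite{Li21} actually prove is that two-positive (resp.\ three-positive) $\mathring{R}$ implies PIC1, and the convergence then follows from \cite{Brendle08}. Moreover, weakly PIC (and even weakly PIC2) admits closed examples such as $\mathbb{CP}^m$ and products of spheres, so no comparison to a PIC-type cone, on its own, can separate spheres from the other symmetric spaces in the equality case.

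Third, and most seriously, your rigidity step is wrong. You assert that Hamilton's strong maximum principle together with Berger's holonomy classification forces a local de Rham splitting whose only three-nonnegative compact realizations are flat. But the Berger--Simons dichotomy does not force reducibility; it also allows locally irreducible, locally symmetric manifolds. This is exactly the residual case in \cite{Li21}, which classified closed manifolds with three-nonnegative $\mathring{R}$ as flat, diffeomorphic to spherical space forms, or isometric to compact irreducible symmetric spaces, and Ricci flow or holonomy considerations alone could not eliminate the last possibility. What actually closes Theorem~\ref{thm Nishikawa} is the contribution of \cite{NPW22}: a new Bochner--Weitzenb\"ock formula expressing the curvature term in $\frac{1}{2}\Delta|\omega|^2$ for harmonic $p$-forms through the eigenvalues of $\mathring{R}$, combined with a weight principle, showing that $\frac{n+2}{2}$-nonnegative $\mathring{R}$ forces $M$ to be flat or a rational homology sphere, thereby ruling out the irreducible symmetric spaces on cohomological grounds. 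That Bochner-technique ingredient is entirely absent from your outline, and without it the argument does not terminate.
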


Theorem \ref{thm Nishikawa} settles a conjecture of Nishikawa \cite{Nishikawa86} under weaker assumptions but with stronger conclusions. The original conjecture states that a closed Riemannian manifold with positive (respectively, nonnegative) curvature operator of the second kind is diffeomorphic to a spherical space form (respectively, a Riemannian locally symmetric space). The positive part was resolved in a pioneering paper by Cao, Gursky, and Tran \cite{CGT23}. They found that two-positive curvature operator of the second kind implies the PIC1 condition (i.e. $M\times \R$ has positive isotropic curvature) and then appealed to Brendle's convergence result \cite{Brendle08} of closed Ricci flows with PIC1 to constant sectional curvature. Shortly after, the author \cite{Li21} weakened their assumption to three-positive curvature operator of the second kind and also classified closed manifolds with three-nonnegative curvature operator of the second kind: they are either flat, or diffeomorphic to spherical space forms, or isometric to compact irreducible symmetric spaces. A few months later, Nienhaus, Petersen, and Wink \cite{NPW22} discovered a new Bochner formula for the curvature operator of the second kind and used it to prove that a closed Riemannian manifold with $\frac{n+2}{2}$-nonnegative curvature operator of the second kind must be either flat or a rational homology sphere, thus ruling out compact irreducible symmetric spaces in the author's classification and yielding Theorem \ref{thm Nishikawa}.

In the above discussion, the curvature operator (of the first kind by Nishikawa's terminology \cite{Nishikawa86}) $\hat{R}: \wedge^2(TM) \to \wedge^2(TM)$ refers to the action of the Riemann curvature tensor $R_{ijkl}$ on two-forms via
    \begin{equation*}
        \hat{R}(\omega)_{ij}=\frac{1}{2}\sum_{k,l=1}^n R_{ijkl}\omega_{kl}.
    \end{equation*}
Throughout the paper, we use the convention that $R_{ijij} >0$ on the round sphere. 
Understanding the geometric and topological consequences of positivity conditions on the curvature operator is of longstanding interest in Riemannian geometry; see \cite{Meyer71}, \cite{Tachibana74}, \cite{GM75}, \cite{Hamilton82, Hamilton86}, \cite{Chen91}, \cite{BW08}, \cite{NW07} and \cite{PW21}, etc. 
The curvature operator of the second kind $\mathring{R}$ is defined by
$$\mathring{R} = \pi \circ \overline{R}: S^2_0(TM) \to  S^2_0(TM),$$
where $\overline{R}:S^2(TM) \to S^2(TM)$ is the action of $R_{ijkl}$ on symmetric two-tensors via
\begin{equation*}
    \overline{R}(h)_{ij} =\sum_{k,l=1}^n R_{iklj}h_{kl},
\end{equation*}
and $\pi: S^2(TM) \to S^2_0(TM)$ is the projection map from symmetric two-tensors to traceless symmetric two-tensors. For a real number $\a \in [1,(n-1)(n+2)/2]$, a Riemannian manifold $(M^n,g)$ is said to have $\a$-nonnegative curvature operator of the second kind if for any $p\in M$, 
\begin{equation*}
     \l_1 + \cdots +\l_{\lfloor{\a}\rfloor} +(\a-\lfloor{\a}\rfloor)\l_{\lfloor{\a}\rfloor+1} \geq 0, 
\end{equation*}
where $\l_1 \leq \cdots \leq \l_{(n-1)(n+2)/2}$ are the eigenvalues of $\mathring{R}$ at $p$ and 
$$\lfloor{x}\rfloor:=\max\{ m \in \mathbb{Z}: m \leq x\}$$ denotes the floor function. Similarly, one defines $\a$-positivity, $\a$-negativity, and $\a$-nonpositivity of any symmetric operator.

The action of the Riemann curvature tensor on symmetric two-tensors indeed has a long history. It appeared for K\"ahler manifolds in the study of the deformation of complex analytic structures by Calabi and Vesentini \cite{CV60}. They introduced the self-adjoint operator $\xi_{\a \b} \to R_{\rho\a\b\sigma} \xi_{\rho \sigma}$ from $S^2(T^{1,0}_p M)$ to itself, and computed the eigenvalues of this operator on Hermitian symmetric spaces of classical type, with the exceptional ones handled shortly after by Borel \cite{Borel60}. In the Riemannian setting, the operator $\overline{R}$ arises naturally in the context of deformations of Einstein structures in Berger and Ebin \cite{BE69} (see also \cite{Koiso79a, Koiso79b} and \cite{Besse08}). 
In addition, it appears in the Bochner-Weitzenb\"ock formulas for symmetric two-tensors (see for example \cite{MRS20}), for differential forms in \cite{OT79}, and for the Riemann curvature tensor in \cite{Kashiwada93}.  
In another direction, curvature pinching estimates for $\overline{R}$ were studied by Bourguignon and Karcher \cite{BK78}, and they calculated eigenvalues of $\overline{R}$ on the complex projective space with the Fubini-Study metric and the quaternionic projective space with its canonical metric. 
Nevertheless, the operators $\overline{R}$ and $\mathring{R}$ are significantly less investigated than $\hat{R}$ and it is our goal to achieve a better understanding of them. 

The resolution of Nishikawa's conjecture has triggered a series of works investigating the curvature operator of the second kind, including \cite{Li22JGA, Li22PAMS, Li22Kahler, Li22product}, \cite{NPWW22}, \cite{FL24}, \cite{DF24}, and \cite{DFY24}. Most of them try to understand the geometric and topological implications of $\a$-nonnegative curvature operator of the second kind and prove improved results by increasing $\a$ (or equivalently weakening the curvature condition). For example, the author \cite[Theorem 1.4]{Li22product} obtained a classification (up to homeomorphism) of closed Riemannian manifolds with $4\frac{1}{2}$-nonnegative curvature operator of the second kind, generalizing Theorem \ref{thm Nishikawa}.  

The purpose of this paper is to introduce new lower bound conditions on the curvature operator of the second kind (see Definition \ref{def cone}) and prove optimal sphere theorems that extend several above-mentioned results. 

Let $(V,g)$ be a (real) Euclidean vector space of dimension $n\geq 3$ and denote by $S^2_0(V)$ the space of traceless symmetric two-tensors on $V$. Throughout this paper, we write
\begin{equation*}
    N=N(n):=\dim(S^2_0(V))=\tfrac{(n-1)(n+2)}{2},
\end{equation*}
and we use the convention
\begin{equation*}
\l_1+\cdots +\l_{\a}:=\l_1 + \cdots +\l_{\lfloor{\a}\rfloor} +(\a-\lfloor{\a}\rfloor)\l_{\lfloor{\a}\rfloor+1}
\end{equation*}
when $\a \in [1,N]$ is not an integer. 

\begin{definition}\label{def cone}
Let $\a \in [1,N)$ and $\theta >-1$. 
\begin{enumerate}
    \item We define $\mathcal{C}(\a,\theta)$ to be the cone of symmetric operators $\mathring{R} : S^2_0(V) \to S^2_0(V)$ satisfying 
\begin{equation}\label{eq C2K}
\a^{-1} \left(\l_1 + \cdots +\l_{\a} \right) \geq -\theta \bar{\l}, 
\end{equation}
where $\l_1\leq   \cdots  \leq \l_N$ are the eigenvalues of $\mathring{R}$ and $\bar{\l}$ denotes their average.  
\item We denote by $\mathring{\mathcal{C}}(\a,\theta)$ and $\partial \mathcal{C}(\a,\theta)$ the interior and the boundary of $\mathcal{C}(\a,\theta)$, respectively. 
\item We say a Riemannian manifold $(M^n,g)$ satisfies $\mathring{R}\in \mathcal{C}(\a,\theta)$ (respectively, $\mathring{R}\in \mathring{\mathcal{C}}(\a,\theta)$) if $\mathring{R}_p \in \mathcal{C}(\a,\theta)$ (respectively, $\mathring{R}_p \in \mathring{\mathcal{C}}(\a,\theta)$) for all $p\in M$, where $\mathring{R}_p$ denotes the curvature operator of the second kind at $p$. 
\end{enumerate}
\end{definition}

Note that $\mathring{R} \in \mathcal{C}(\a, 0)$ if and only if $\mathring{R}$ is $\a$-nonnegative and $\mathring{R} \in \mathcal{C}(1,\theta)$ if and only if $\mathring{R}+\theta \bar{\l} \id$ is nonnegative. For general $\a$ and $\theta$, $\mathring{R} \in \mathcal{C}(\a, \theta)$ can be interpreted as that the average of the smallest $\a$ eigenvalues of $\mathring{R}$ is bounded from below by $-\theta \bar{\l}$. Thus, the conditions $\mathring{R} \in \mathcal{C}(\a,\theta)$ give a two-parameter family of lower bounds on $\mathring{R}$.  

Our main motivation to introduce the conditions $\mathring{R} \in \mathcal{C}(\a,\theta)$ comes from proving optimal differentiable sphere theorems. Theorem \ref{thm Nishikawa} implies that the sum of the smallest three eigenvalues of $\mathring{R}$ is indeed negative on all compact symmetric spaces (with their canonical metrics), except spherical space forms. This suggests that a closed Riemannian manifold satisfying $\mathring{R} \in \mathring{\mathcal{C}}(3,\theta)$ with $\theta >0$ sufficiently small should be diffeomorphic to a spherical space form. More ambitiously, one can ask: 
\begin{question}
Given $n\geq 3$ and $\a \in [1,N)$, what is the largest number $\bar{\theta}(n, \alpha)$ such that a closed Riemannian manifold satisfying $\mathring{R} \in \mathring{\mathcal{C}}(\a,\bar{\theta}(n, \alpha))$ is diffeomorphic to a spherical space form? 
\end{question} 

In this paper, we completely answer this question in dimensions three and four and provide a partial result in higher dimensions. 
Note that (see Example \ref{example cylinder}) the curvature operator of the second kind of $\mathbb{S}^{n-1}\times \mathbb{S}^1$ (with the standard product metric) lies on $\partial \mathcal{C}(\a,\bar{\Theta}_{n,\a} )$, where
\begin{equation}\label{eq theta cylinder}
\bar{\Theta}_{n,\a}:= \begin{cases}
        \a^{-1}, & 1\leq \a \leq n, \\
        \a^{-1}+\frac{n(n-\a)}{(n-2)\a}, & n\leq \a <N.
        \end{cases}
\end{equation}
Therefore, we must have $\bar{\theta}(n, \alpha) \leq \bar{\Theta}_{n,\a}$. Below we shall show that $\bar{\theta}(n,\a) =  \bar{\Theta}_{n,\a}$ for $n=3$ and $n=4$.

In dimension three, we prove that
\begin{theorem}\label{thm 3D}
Let $(M^3,g)$ be a closed Riemannian manifold of dimension three. Let $1\leq \a <5=N(3)$ and $\bar{\Theta}_{3,\a}$ be defined as in \eqref{eq theta cylinder}. 
\begin{enumerate}
\item If $(M,g)$ satisfies $\mathring{R} \in \mathring{\mathcal{C}}(\a, \bar{\Theta}_{3,\a})$, then $M$ is diffeomorphic to a spherical space form. 
\item If $(M,g)$ satisfies $\mathring{R} \in \mathcal{C}(\a, \bar{\Theta}_{3,\a})$, then $M$ is diffeomorphic to a quotient of one of the spaces $\mathbb{S}^3$, or $\mathbb{S}^2 \times \R$, or $\R^3$ by a group of fixed point free isometries in the standard metrics. 
\end{enumerate}
\end{theorem}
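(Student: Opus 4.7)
The plan is to reduce the theorem to Hamilton's classification of closed three-manifolds by showing that $\mathring{R}\in\mathcal{C}(\alpha,\bar{\Theta}_{3,\alpha})$ forces $\Ric\geq 0$, and that $\mathring{R}\in\mathring{\mathcal{C}}(\alpha,\bar{\Theta}_{3,\alpha})$ forces $\Ric>0$. Hamilton's theorem \cite{Hamilton82} then yields the strict diffeomorphism to a spherical space form, while the Ricci flow and strong-maximum-principle argument of \cite{Hamilton86} gives the classification of the nonnegative-Ricci case into the three families in (2).

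The first step is the explicit eigenvalue computation of $\mathring{R}$ in dimension three, essentially contained in \cite{CGT23} and \cite{Li21}. In an orthonormal frame $\{e_1,e_2,e_3\}$ diagonalizing the Ricci tensor, with sectional curvatures $K_1\ge K_2\ge K_3$ of the three principal planes, the off-diagonal tensors $e_i\otimes e_j+e_j\otimes e_i$ are eigenvectors of $\mathring R$ with eigenvalues $K_1,K_2,K_3$, and the two-dimensional traceless diagonal subspace contributes
\[\mu_{\pm}=\tfrac{\scal}{6}\pm\sqrt{\tfrac{2}{3}\sum_i(K_i-\bar K)^2}, \qquad \bar K=\tfrac{\scal}{6},\]
with generic ordering $\mu_-\le K_3\le K_2\le K_1\le\mu_+$ and $\bar\lambda=\scal/6$. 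Expressing $K_i=(\scal/2)-r_i$ in terms of the Ricci eigenvalues $r_1\le r_2\le r_3$ converts $\mathring{R}\in\mathcal{C}(\alpha,\bar{\Theta}_{3,\alpha})$ into an explicit polynomial inequality in $(r_1,r_2,r_3)$.

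The heart of the proof is to check that this inequality forces $r_1\ge 0$, with strict inequality in the interior, and with equality precisely at the cylindrical configuration $K_2=K_3=0$, $K_1>0$. I would split into the two regimes of $\bar{\Theta}_{3,\alpha}$. For $\alpha\in[1,3]$ the smallest $\alpha$ eigenvalues are $\mu_-$ together with $K_3$ (if $\alpha\ge 2$) and $K_2$ (if $\alpha\ge 3$); for instance $\alpha=1$ reduces (when $\scal>0$) to $K_1K_2+K_1K_3+K_2K_3\ge 0$, which a short sign analysis on $K_2,K_3$ shows is equivalent to $r_1\ge 0$. For $\alpha\in[3,5)$ further terms $K_1$ and then $\mu_+$ enter the partial sum, and the resulting polynomial again collapses to a condition equivalent to $r_1\ge 0$. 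Example \ref{example cylinder} realizes equality for every admissible $\alpha$, so the bound is sharp. The main obstacle is the algebraic bookkeeping in the regime $\alpha\in[3,5)$ where $\bar{\Theta}_{3,\alpha}$ becomes negative: one must track signs carefully, confirm that the interior condition actually forces $\scal>0$, and rule out spurious rigid configurations to identify the equality case with the cylinder (and with flat quotients as the $\scal\to 0$ limit).
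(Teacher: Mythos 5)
Your plan is sound, and it is in fact the alternative route the paper explicitly mentions: using the explicit eigenvalue formula for $\mathring{R}$ in dimension three (from \cite{FL24}) rather than the paper's actual argument. The paper instead proves Theorem~\ref{thm 3D} as an immediate corollary of Proposition~\ref{prop Ricci nonnegative}, which is in turn a special case of Proposition~\ref{prop Ricci}, a \emph{dimension-independent} Ricci lower bound. That proposition is established by choosing the orthonormal subset $\{\vp_1,\dots,\vp_n\}$ of $S^2_0(V)$ modeled on the eigentensors of $\mathring{R}$ on $\mathbb{S}^{n-1}\times\mathbb{S}^1$, testing $\mathring{R}$ against them, and applying Lemma~\ref{lemma average}; the calculation collapses directly to a linear inequality $R_{11}\geq c(n,\a,\theta)\bar\l$ with no case distinctions on signs of sectional curvatures. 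The payoff of the paper's route is that the same computation gives Proposition~\ref{prop Ricci} in all dimensions, and the $n=3$ sphere theorem drops out for free. Your route is more hands-on and self-contained in dimension three, but forces you to carry out the $\a$-dependent polynomial bookkeeping that the paper's choice of test tensors avoids.

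Two cautions on your write-up. First, you present the case $\a\in[3,5)$ as ``the main obstacle'' and do not actually carry out the algebra; this is the regime where all five eigenvalues $\mu_-\leq K_3\leq K_2\leq K_1\leq\mu_+$ enter the partial sum and $\bar\Theta_{3,\a}=\a^{-1}(10-3\a)$ changes sign, so the verification that the inequality forces $r_1=K_2+K_3\geq 0$ (with strictness in the interior) still needs to be done case by case; as it stands this is a gap, not a detail. Second, a small slip in the $\a=1$ discussion: the condition $K_1K_2+K_1K_3+K_2K_3\geq 0$ together with $S\geq 0$ \emph{implies} $K_2+K_3\geq 0$, but it is not \emph{equivalent} to it (take $K_1=K_2=1$, $K_3=-1$, where $r_1=0$ but $\sum_{i<j}K_iK_j=-1$). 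Since you only need the implication this does not break the argument, but you should not state the two as equivalent.
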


Other than the case $\a=3\frac{1}{3}$ which was proved by the author \cite[Theorem 1.7]{Li22JGA}, Theorem \ref{thm 3D} is new for all other $\a \in [1,5)$. The key is to establish implications of $\mathring{R} \in \mathcal{C}(\a,\bar{\Theta}_{n,\a})$ on the Ricci curvature. More precisely, we prove the following result in all dimensions. 
\begin{proposition}\label{prop Ricci nonnegative}
Let $\a \in [1,N)$ and $\bar{\Theta}_{n,\a}$ be defined as in \eqref{eq theta cylinder}. Let $R\in S^2_B(\wedge^2 V)$ be an algebraic curvature operator and $\mathring{R}$ its induced curvature operator of the second kind. 
If $\mathring{R} \in \mathcal{C}(\a, \bar{\Theta}_{n,\a})$ (respectively, $\mathring{R} \in \mathring{\mathcal{C}}(\a, \bar{\Theta}_{n,\a})$), then $R$ has nonnegative (respectively, positive) Ricci curvature. 
\end{proposition}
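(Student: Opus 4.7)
The plan is to express $\Ric(v,v)$ as a sum of quadratic forms of $\mathring R$ on an explicit orthonormal family in $S^2_0(V)$, and then apply Ky Fan's trace inequality.

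Fix a unit vector $v\in V$ and complete it to an orthonormal basis $\{v,e_2,\dots,e_n\}$. Consider the $n-1$ mixed tensors $h_k^v:=\tfrac{1}{\sqrt{2}}(v\otimes e_k+e_k\otimes v)$, $k=2,\dots,n$, and the diagonal tensor $\tilde d_v:=\sqrt{\tfrac{n}{n-1}}\bigl(v\otimes v-\tfrac{1}{n}g\bigr)$. A direct calculation from $\bar R(h)_{ij}=\sum_{k,l}R_{iklj}h_{kl}$, the algebraic symmetries of $R$, and the contraction of $R$ into Ricci shows that $\{\tilde d_v,h_2^v,\dots,h_n^v\}$ is an orthonormal family in $S^2_0(V)$ and
\begin{equation}\label{eq:planRicci}
\Ric(v,v)=\sum_{k=2}^{n}\langle\mathring R h_k^v,h_k^v\rangle,\qquad
\langle\mathring R\tilde d_v,\tilde d_v\rangle=\tfrac{2}{n-1}\Ric(v,v)-\bar\l.
\end{equation}
Moreover, combining the hypothesis with the trivial bound $\a^{-1}(\l_1+\cdots+\l_\a)\leq\bar\l$ gives $(1+\bar\Theta_{n,\a})\bar\l\geq0$, so $\bar\l\geq0$ since $\bar\Theta_{n,\a}>-1$.

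For the core range $1\leq\a\leq n$ we have $\a\bar\Theta_{n,\a}=1$, so the hypothesis reads $\l_1+\cdots+\l_\a\geq-\bar\l$. Reorder the $n-1$ values $q_k:=\langle\mathring R h_k^v,h_k^v\rangle$ nondecreasingly as $q_{(1)}\leq\cdots\leq q_{(n-1)}$; since their mean equals $\Ric(v,v)/(n-1)$, the smallest $\a-1$ of them satisfy
\[q_{(1)}+\cdots+q_{(\a-1)}\leq\tfrac{\a-1}{n-1}\Ric(v,v).\]
Applying Ky Fan's trace inequality to the $\a$ orthonormal tensors $\{\tilde d_v,h_{(1)}^v,\dots,h_{(\a-1)}^v\}$ yields
\[\langle\mathring R\tilde d_v,\tilde d_v\rangle+q_{(1)}+\cdots+q_{(\a-1)}\geq\l_1+\cdots+\l_\a\geq-\bar\l;\]
substituting the diagonal identity from \eqref{eq:planRicci} together with the previous display gives $\tfrac{\a+1}{n-1}\Ric(v,v)\geq0$, hence $\Ric(v,v)\geq0$. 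A strict hypothesis propagates to strict Ricci positivity in the same step.

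For the remaining range $n<\a<N$ I would reduce to the case $\a=n$: since $\l_{n+1},\dots,\l_\a$ are the smallest $\a-n$ of the $N-n$ values $\l_{n+1},\dots,\l_N$, one has $\l_{n+1}+\cdots+\l_\a\leq\tfrac{\a-n}{N-n}\bigl(N\bar\l-\sum_{i=1}^n\l_i\bigr)$, and inserting this in the hypothesis yields $\sum_{i=1}^n\l_i\geq-\bar\l$ after invoking the arithmetic identity
\[\a\bar\Theta_{n,\a}+\tfrac{(\a-n)N}{N-n}=1-\tfrac{\a-n}{N-n},\]
which follows from $2N=(n-1)(n+2)$ and is exactly where the explicit form of $\bar\Theta_{n,\a}$ on the range $\a\geq n$ is forced. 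Then the previous paragraph with $\a=n$ applies. I expect the main obstacle to lie in carefully establishing the two identities in \eqref{eq:planRicci}, particularly tracking the sign conventions for $R_{ijkl}$, and in verifying the arithmetic identity that drives the $\a>n$ reduction; the rest is a clean application of Ky Fan's trace inequality combined with the average-versus-minimum comparison.
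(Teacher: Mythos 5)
Your proposal is correct and follows essentially the same route as the paper: the tensors $\tilde d_v$ and $h_k^v$ are exactly the paper's $\vp_1,\dots,\vp_n$ in the proof of its Proposition~3.1, the two identities in your display are the paper's (3.4) and (3.5), and the estimate is the same combination of the Ky Fan--type bound $\l_1+\cdots+\l_\a \le \sum \mathring R(\vp_i,\vp_i)$ with the average comparison of Lemma~2.9. The only cosmetic difference is your treatment of $n<\a<N$, where you first reduce at the eigenvalue level to $\mathring R\in\mathcal C(n,\bar\Theta_{n,n})$ and then invoke the $\a=n$ case, while the paper redoes the estimate with a full orthonormal basis of $S^2_0(V)$; both are the same averaging step, so this is not a genuinely different argument.
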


Proposition \ref{prop Ricci nonnegative} is optimal on $\mathbb{S}^{n-1} \times \mathbb{S}^1$. Indeed, the proof of Proposition \ref{prop Ricci nonnegative} (and the more general Proposition \ref{prop Ricci}) uses $\mathbb{S}^{n-1} \times \mathbb{S}^1$ as a model space by applying $\mathring{R}$ to the eigentensors of the curvature operator of the second kind on $\mathbb{S}^{n-1} \times \mathbb{S}^1$. This strategy has been successfully employed by the author in previous works \cite{Li22JGA, Li22PAMS, Li22Kahler, Li22product} with model spaces such as $\CP^m$, $\S^k \times \S^{n-k}$, and $\CP^k \times \CP^{m-k}$. 
With Proposition \ref{prop Ricci nonnegative}, Theorem \ref{thm 3D} then follows from Hamilton's famous classification of closed three-manifolds with positive/nonnegative Ricci curvature in \cite{Hamilton82, Hamilton86}. Alternatively, Proposition \ref{prop Ricci nonnegative} can be proved using the explicit expressions for the eigenvalues of $\mathring{R}$ in terms of that of $\hat{R}$ in dimension three found by Fluck and the author in \cite{FL24}.

In dimension four, we prove that 
\begin{theorem}\label{thm 4D}
Let $(M^4,g)$ be a closed Riemannian manifold of dimension four. Let $1\leq \a <9=N(4)$ and $\bar{\Theta}_{4,\a}$ be defined as in \eqref{eq theta cylinder}.
\begin{enumerate}
    \item If $(M,g)$ satisfies $\mathring{R} \in \mathring{\mathcal{C}}(\a, \bar{\Theta}_{4,\a})$, then $M$ is diffeomorphic to $\mathbb{S}^4$ or $\mathbb{RP}^4$. 
    \item If $(M,g)$ satisfies $\mathring{R} \in \mathcal{C}(\a, \bar{\Theta}_{4,\a})$, then one of the following statements holds:
    \begin{itemize}
        \item [(a)] $(M,g)$ is flat;
        \item [(b)] $M$ is diffeomorphic to $\mathbb{S}^4$ or $\mathbb{RP}^4$;
        \item [(c)] $1\leq \a \leq 4$ and the universal cover of $(M,g)$ is diffeomorphic to $\mathbb{S}^3 \times \R$;
        \item [(d)] $4<\a <9$ and the universal cover of $(M,g)$ is isometric to $\mathbb{S}^3 \times \R$;
        \item [(e)]$4\leq \a <9$ and $(M,g)$ is isometric to $\mathbb{CP}^2$ with the Fubini-Study metric. 
    \end{itemize}
\end{enumerate}    
\end{theorem}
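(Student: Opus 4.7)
My plan is to combine Proposition \ref{prop Ricci nonnegative} with a four-dimensional upgrade of the Ricci estimate to a two-positivity estimate on the curvature operator of the first kind, and then appeal to classical four-dimensional classification results. The two piecewise regimes of $\bar{\Theta}_{4,\alpha}$ on $[1,4]$ and $[4,9)$ in \eqref{eq theta cylinder} should dictate the different rigidity scenarios listed in (a)--(e).

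As the first step I would invoke Proposition \ref{prop Ricci nonnegative}, which immediately yields positive Ricci under the strict hypothesis in (1) and nonnegative Ricci under the nonstrict hypothesis in (2). Since Ricci positivity alone is insufficient in dimension four to produce a spherical space form, the heart of the argument must extract more geometric information from $\mathring{R} \in \mathring{\mathcal{C}}(\alpha, \bar{\Theta}_{4,\alpha})$ using the specific structure of $S^2_0(\mathbb{R}^4)$.

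The central technical step is to show that in dimension four this condition implies that the curvature operator of the first kind $\hat{R}$ is two-positive (respectively, two-nonnegative in the nonstrict case). This should proceed by testing $\mathring{R}$ against tracefree symmetric two-tensors of the form $u\otimes v + v\otimes u$ attached to an orthonormal pair $(u,v)$, together with the natural traceless diagonal perturbations, and tracking the eigenvalue count so that $\bar{\Theta}_{4,\alpha}$ emerges as the precise threshold at which $\mathbb{S}^3 \times \mathbb{S}^1$ sits on the common boundary of both cones. This is in the spirit of the Cao-Gursky-Tran observation \cite{CGT23} that two-positive $\mathring{R}$ implies PIC1, and it should parallel the proof of Proposition \ref{prop Ricci nonnegative}, with the same $\mathbb{S}^{n-1} \times \mathbb{S}^1$ model guiding the choice of test tensors.

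Granted two-positivity of $\hat{R}$, part (1) follows immediately from the Hamilton-Chen theorem \cite{Hamilton86, Chen91}, classifying closed four-manifolds with two-positive curvature operator of the first kind as diffeomorphic to spherical space forms. For part (2), two-nonnegative $\hat{R}$ plus Hamilton's strong maximum principle along the Ricci flow and the analysis of the equality case yield the listed alternatives: flatness and spherical quotients correspond to the generic Ricci-flat and strictly Ricci-positive cases, while a splitting of the universal cover as $\mathbb{S}^3 \times \mathbb{R}$ arises when $\Ric$ has a kernel direction. The dichotomy between topological splitting (c) and isometric splitting (d) should be governed by whether $\alpha \leq 4$ or $\alpha > 4$, reflecting that the larger-$\alpha$ regime in \eqref{eq theta cylinder} provides strictly more information at a kernel direction of $\mathring{R}$ and hence forces the splitting to be geometric. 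Fubini-Study $\mathbb{CP}^2$ should appear precisely for $\alpha \geq 4$ because its curvature operator of the second kind lies on $\partial \mathcal{C}(\alpha,\bar{\Theta}_{4,\alpha})$ exactly in that range, verified by a direct eigenvalue computation, while the competitor $\mathbb{S}^2 \times \mathbb{S}^2$ is excluded by checking that its $\mathring{R}$ violates \eqref{eq C2K} for every admissible $\alpha$.

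The principal obstacle is the sharp implication $\mathring{R} \in \mathring{\mathcal{C}}(\alpha, \bar{\Theta}_{4,\alpha}) \Rightarrow \hat{R}$ two-positive with exactly optimal constants across both branches of \eqref{eq theta cylinder}; this is more delicate than the Ricci implication in Proposition \ref{prop Ricci nonnegative} because the admissible number of small eigenvalues of $\mathring{R}$ depends on $\alpha$ in a piecewise fashion, and the optimal test tensors in the two regimes are different. A secondary difficulty is the separation of cases (c) and (d), which requires a strong-maximum-principle analysis finely calibrated to whether $\alpha$ lies below or above the threshold $4$, so that the expected jump in regularity of the splitting matches exactly the transition point of $\bar{\Theta}_{4,\alpha}$.
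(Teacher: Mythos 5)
Your central technical claim — that $\mathring{R} \in \mathring{\mathcal{C}}(\alpha, \bar{\Theta}_{4,\alpha})$ forces the curvature operator of the first kind $\hat{R}$ to be two-positive — is not what the paper establishes, and I do not believe it is true. The paper's Proposition \ref{prop PIC} proves the strictly weaker implication that $\mathring{R}\in\mathring{\mathcal{C}}(\alpha,\bar{\Theta}_{4,\alpha})$ implies positive \emph{isotropic} curvature, and then cites Hamilton \cite{Hamilton97} for four-manifolds with PIC, not the Hamilton--Chen two-positive-$\hat{R}$ theorem. In dimension four, PIC amounts to two-nonnegativity of $\hat{R}$ restricted \emph{separately} to $\Lambda^+$ and $\Lambda^-$, which is strictly weaker than two-nonnegativity on all of $\Lambda^2$: for instance an algebraic $R$ with $\hat{R}$-eigenvalues $(5,1,-1)$ on $\Lambda^+$ and $(5,1,-1)$ on $\Lambda^-$ has nonnegative isotropic curvature yet the two smallest eigenvalues of $\hat{R}$ sum to $-2$. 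Since the model spaces $\mathbb{CP}^2$ and $\mathbb{S}^3\times\mathbb{S}^1$ both sit on the boundary of $\mathcal{C}(\alpha,\bar{\Theta}_{4,\alpha})$ \emph{and} on the boundary of two-nonnegativity of $\hat{R}$, the strict implication you need would require these two codimension-one boundaries to be tangent along the whole boundary stratum; there is no reason for this and no argument is given. You would have to prove this sharp two-positivity implication from scratch, and the burden would be substantial; the paper deliberately routes through the weaker PIC condition because that is what can actually be verified with the $\mathbb{CP}^2$-modeled test tensors $\varphi_1,\varphi_2,\varphi_3$.

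Your treatment of part (2) is also too schematic. The paper does not run a ``strong maximum principle along the Ricci flow for two-nonnegative $\hat{R}$.'' Instead it invokes the Cheeger--Gromoll splitting theorem on the universal cover (using the nonnegative Ricci curvature from Proposition \ref{prop Ricci nonnegative}), and then does explicit casework on the dimension $k$ of the Euclidean factor: for $k=2$ it uses the product eigenvalue formulas of \cite[Proposition 2.1]{Li22product} to force flatness, for $k=1$ it uses Hamilton's three-dimensional result together with a pointwise pinching argument when $4<\alpha<9$ to upgrade the diffeomorphism to an isometry (this is exactly what separates your cases (c) and (d), and it requires the sharpened inequality $\mu_1+\cdots+\mu_{\alpha-4}\le\frac{\alpha-4}{5}\sum\mu_i$, not a Ricci-flow maximum principle), and for $k=0$ it appeals to Brendle's classification \cite[Theorem 9.30]{Brendle10book} of closed, simply connected, locally irreducible four-manifolds with nonnegative isotropic curvature, feeding the K\"ahler alternative into Theorem \ref{thm Kahler flat} to pin down $\mathbb{CP}^2$. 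Without invoking Brendle's theorem (or an equivalent classification) you have no way to arrive at the K\"ahler case in (2e), and without Theorem \ref{thm Kahler flat} you cannot conclude that the K\"ahler case is actually the Fubini--Study metric rather than some other K\"ahler surface. So both the intermediate curvature implication and the structure of the case analysis need to be replaced before this proposal becomes a proof.
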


Previously, Theorem \ref{thm 4D} was only known for $\a=4\frac{1}{2}$ by \cite[Theorem 1.4]{Li22product}. We point out that all the cases in part (2) of Theorem \ref{thm 4D} can occur. The diffeomorphism in (2c) cannot be upgraded to an isometry, as $N^3 \times \R$ satisfies $\mathring{R} \in \mathcal{C}(\a, \bar{\Theta}_{4,\a})$ for any $1\leq \a \leq 4$ as long as $N^3$ has positive curvature operator of the second kind (see \cite[Proposition 2.1]{Li22product}). For (2e), we remark that $\mathbb{CP}^2$ satisfies $\mathring{R} \in \mathcal{C}(\a, \bar{\Theta}_{4,\a})$ if and only if $4\leq \a <9$ (see Example \ref{example complex projective space}). 

To prove Theorem \ref{thm 4D}, we derive, as in \cite{Li22JGA}, implications of $\mathring{R} \in \mathcal{C}(\a,\theta)$ on the isotropic curvature, a notion that played a central role in the proof of the quarter-pinched differentiable sphere theorem in \cite{BS08}. 
In Proposition \ref{prop PIC}, we show using $\mathbb{CP}^2$ as a model space that in dimension four positive isotropic curvature is implied by a slightly weaker condition than $\mathring{R} \in \mathring{\mathcal{C}}(\a, \bar{\Theta}_{4,\a})$. Hence, $\mathring{R} \in \mathring{\mathcal{C}}(\a, \bar{\Theta}_{4,\a})$ implies both positive Ricci curvature and positive isotropic curvature in dimension four. Part (1) of Theorem \ref{thm 4D} then follows from Hamilton's work \cite{Hamilton97}. The proof of part (2) requires further investigation using \cite{Li22product} when $M$ is locally reducible and also uses the $m=2$ case of Theorem \ref{thm Kahler flat}.

Theorem \ref{thm 3D} and Theorem \ref{thm 4D} imply $\bar{\theta}(n,\a)=\bar{\Theta}_{n,\a}$ for $n=3$ and $n=4$, respectively. One may wonder whether $\bar{\theta}(n,\a)=\bar{\Theta}_{n,\a}$ remains true for any $n\geq 5$. This speculation is supported for $\a = \frac{n+2}{2}$ by the following homological sphere theorem in higher dimensions.

\begin{theorem}\label{thm homology sphere}
Let $(M^n,g)$ be a closed Riemannian manifold of dimension $n\geq 5$. Suppose $(M,g)$ satisfies $\mathring{R} \in \mathcal{C}\left(\frac{n+2}{2}, \theta \right)$ for some $-1 < \theta < \frac{2}{n+2}$. Then $(M,g)$ is either flat or a rational homology sphere.  
\end{theorem}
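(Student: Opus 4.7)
The plan is to refine the Bochner-technique argument of Nienhuas, Petersen, and Wink \cite{NPW22}, which handles the case $\theta=0$, by carefully tracking the defect introduced by $\theta\neq 0$.

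First, I would verify that the hypothesis forces $\bar{\lambda}=\tfrac{\mathrm{Scal}}{n(n-1)}\geq 0$ on $M$. A sorting argument shows that whenever $\lambda_1+\cdots+\lambda_{(n+2)/2}\geq 0$ at a point, all eigenvalues $\lambda_i$ with $i\geq (n+2)/2$ must be nonnegative (since $\lambda_{(n+2)/2}$ is at least the average of the first $(n+2)/2$), and hence $\bar{\lambda}\geq 0$ at that point. Our hypothesis \eqref{eq C2K} reads $\lambda_1+\cdots+\lambda_{(n+2)/2}\geq -\theta\cdot\tfrac{n+2}{2}\bar{\lambda}$; for $\theta\in[0,\tfrac{2}{n+2})$, if $\bar{\lambda}<0$ then the right-hand side is strictly positive, which via the sorting argument forces $\bar{\lambda}\geq 0$, a contradiction. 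For $\theta\in(-1,0)$, the containment $\mathcal{C}(\tfrac{n+2}{2},\theta)\subset\mathcal{C}(\tfrac{n+2}{2},\bar{\Theta}_{n,(n+2)/2})$ is valid on $\{\bar{\lambda}\geq 0\}$, and a connectedness argument combined with Proposition~\ref{prop Ricci nonnegative} extends $\mathrm{Ric}\geq 0$ (and hence $\mathrm{Scal}\geq 0$) to all of $M$.

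Next, for any harmonic $p$-form $\omega$ with $1\leq p\leq n-1$, the Weitzenb\"ock identity gives $\int_M |\nabla\omega|^2\,dV = -\int_M \langle \mathcal{R}_p\omega,\omega\rangle\,dV$, where $\mathcal{R}_p$ denotes the Weitzenb\"ock curvature on $p$-forms. The algebraic Bochner-type lemma of \cite{NPW22} produces a traceless symmetric $2$-tensor $T_\omega$ built from $\omega$ whose expansion $\sum_\alpha \mu_\alpha h_\alpha$ in any orthonormal eigenbasis $\{h_\alpha\}$ of $\mathring{R}$ satisfies the pointwise bound $\mu_\alpha^2\leq\tfrac{2}{n+2}|T_\omega|^2$, together with a pointwise estimate relating $\langle\mathcal{R}_p\omega,\omega\rangle$ to $\langle\mathring{R}(T_\omega),T_\omega\rangle$. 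The pigeonhole inequality then yields
\begin{equation*}
\langle\mathring{R}(T_\omega),T_\omega\rangle \geq \tfrac{2}{n+2}\bigl(\lambda_1+\cdots+\lambda_{(n+2)/2}\bigr)|T_\omega|^2 \geq -\theta\,\bar{\lambda}\,|T_\omega|^2,
\end{equation*}
where the second inequality uses our hypothesis. Feeding this back into the Weitzenb\"ock curvature bound gives a pointwise estimate of the form $\langle\mathcal{R}_p\omega,\omega\rangle \geq -\tfrac{c_{n,p}\theta}{n(n-1)}\mathrm{Scal}\cdot|\omega|^2$ for an explicit constant $c_{n,p}>0$.

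Substituting into the Weitzenb\"ock identity and invoking the refined Kato inequality $|\nabla\omega|^2\geq k_{n,p}|\nabla|\omega||^2$ for harmonic $p$-forms, after careful bookkeeping of coefficients one obtains an integral estimate of the form
\begin{equation*}
\Bigl(\tfrac{2}{n+2}-\theta\Bigr) C_{n,p}\int_M \mathrm{Scal}\cdot|\omega|^2\,dV \ \leq\ 0,
\end{equation*}
with $C_{n,p}>0$. Since $\theta<\tfrac{2}{n+2}$, the prefactor is strictly positive, forcing $\int_M\mathrm{Scal}\cdot|\omega|^2\,dV=0$ and $\nabla\omega\equiv 0$. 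A standard dichotomy then concludes the argument: either $\mathrm{Scal}>0$ on an open set, in which case the parallel form $\omega$ must vanish there and hence on all of $M$, yielding $b_p(M)=0$ for every $1\leq p\leq n-1$ so that $M$ is a rational homology sphere; or $\mathrm{Scal}\equiv 0$ on $M$, whence $\mathrm{Ric}\equiv 0$ (since $\mathrm{Ric}\geq 0$) and the hypothesis degenerates to $\mathring{R}$ being $\tfrac{n+2}{2}$-nonnegative, and Theorem~\ref{thm Nishikawa} combined with Ricci-flatness forces $(M,g)$ to be flat.

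The main obstacle is calibrating the constants so that the threshold $\theta=\tfrac{2}{n+2}$ emerges sharply from the interplay between NPW's pigeonhole estimate (contributing the factor $\tfrac{2}{n+2}$) and the refined Kato inequality (contributing $k_{n,p}$); this parallels the optimality of $\alpha=\tfrac{n+2}{2}$ established in \cite{NPW22}. The sharpness is consistent with the example $\mathbb{S}^{n-1}\times\mathbb{S}^1$, which saturates $\bar{\Theta}_{n,(n+2)/2}=\tfrac{2}{n+2}$ while admitting a nontrivial parallel $1$-form.
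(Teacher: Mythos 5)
Your proposal has the right broad shape---Bochner formula, NPW weight principle, dichotomy between $\Scal>0$ somewhere and $\Scal\equiv 0$---but the key middle portion does not actually close, and a few steps are off.

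First, the verification that $\bar{\l}\geq 0$ is overcomplicated and partly circular. For $\theta\in(-1,0)$ you propose to use the cone containment ``valid on $\{\bar{\l}\geq 0\}$'' to deduce $\bar{\l}\geq 0$, which assumes what you want to show. The paper's Proposition~\ref{prop basic cones} gives the clean one-line argument: $\a^{-1}(\l_1+\cdots+\l_\a)\leq\bar{\l}$ always holds by Lemma~\ref{lemma average}, so $\mathring{R}\in\mathcal{C}(\a,\theta)$ forces $(1+\theta)\bar{\l}\geq 0$, hence $\bar{\l}\geq 0$ since $\theta>-1$, with equality forcing $R=0$.

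Second, and more seriously, your curvature-term estimate does not engage with the actual structure of the NPW Bochner formula. The identity \eqref{eq Bochner curvature term} decomposes $\tfrac{3}{2}g(\Ric_L(\omega),\omega)$ into \emph{three} pieces: $\sum_\a\l_\a|S_\a\omega|^2$, plus $\tfrac{p(n-2p)}{n}\Ric(\omega,\omega)$, plus $\tfrac{p^2}{n^2}S|\omega|^2$. Your proposal handles only the analogue of the first piece (your pigeonhole bound with $T_\omega$ is essentially the weight principle applied to $\sum_\a\l_\a|S_\a\omega|^2$), and then jumps to an estimate of the form $\langle\mathcal{R}_p\omega,\omega\rangle\geq -\tfrac{c_{n,p}\theta}{n(n-1)}\Scal|\omega|^2$ ``after careful bookkeeping.'' But that bookkeeping is precisely where the work lies: one needs a lower bound on $\Ric(\omega,\omega)$ in terms of $\bar{\l}|\omega|^2$, which in the paper is supplied by Proposition~\ref{prop Ricci p} (a partial-trace Ricci estimate $\sum_{i=1}^p R_{ii}\geq\tfrac{(n-1)p}{n-p+2}(1-(n-p+1)\theta)\bar{\l}$ proved by a nontrivial combinatorial arrangement of eigentensors). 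Combining that with the scalar term and the identity $|\omega|^2=\tfrac{2n}{p(n-p)(n+2)}\sum_\a|S_\a\omega|^2$ produces the explicit threshold $A_{n,p}$ of Theorem~\ref{thm Betti}, and only then does one check $A_{n,p}>\tfrac{2}{n+2}$ for $2\leq p\leq n/2$. Without the partial-trace Ricci estimate, your chain of inequalities is not derivable. Moreover, the refined Kato inequality you invoke plays no role in the paper's argument and does not substitute for the missing Ricci estimate; the paper simply applies the maximum principle to \eqref{eq Bochner} once $g(\Ric_L(\omega),\omega)\geq 0$ is established.

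Third, the case $p=1$ is not covered by the weight-principle argument (indeed $A_{n,p}$ is only defined for $2\leq p\leq\tfrac{n}{2}$), and your proposal quietly extends to ``all $1\leq p\leq n-1$.'' In the paper this is handled separately: Proposition~\ref{prop Ricci} with $\a=\tfrac{n+2}{2}$ and $\theta<\tfrac{2}{n+2}$ gives $\Ric\geq\delta\bar{\l}g$ with $\delta>0$, so at any point where $\bar{\l}>0$ the Ricci curvature is strictly positive and the classical Bochner theorem kills $b_1$. This is in fact where the threshold $\tfrac{2}{n+2}$ comes from in Theorem~\ref{thm homology sphere} (it equals $\bar{\Theta}_{n,(n+2)/2}$); the Betti-number thresholds $A_{n,p}$ for $p\geq 2$ are all strictly larger. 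Your sharpness discussion attributes the threshold to an interplay with the Kato inequality, which is not the mechanism here.

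So the skeleton (Bochner, weight principle, final dichotomy ending in Theorem~\ref{thm Nishikawa} for the flat case) matches the paper, but the crucial Ricci-term estimate, the explicit constant $A_{n,p}$, and the separate $p=1$ argument are all missing, and the refined Kato inequality is a red herring.
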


Taking $\theta =0$ in Theorem \ref{thm homology sphere} recovers the homological sphere theorem of Nienhaus, Petersen, and Wink \cite[Theorem A]{NPW22}. 
The condition $\theta < \frac{2}{n+2}$ is optimal, as $\mathbb{S}^{n-1}\times \mathbb{S}^1$ satisfies $\mathring{R} \in \partial \mathcal{C}(\frac{n+2}{2}, \frac{2}{n+2})$. 
To prove Theorem \ref{thm homology sphere}, we make use of the Bochner formula (see \eqref{eq Bochner} and \eqref{eq Bochner curvature term}) derived in \cite{NPW22}. 
Together with a weight principle (see \cite[Theorem 3.6]{NPW22}), they also proved the vanishing of the $p$-th Betti number under $C(n,p)$-positivity of $\mathring{R}$, where $C(n,p)$ is an explicit constant. Using their method, we prove the following more general result, which implies Theorem \ref{thm homology sphere}. 
\begin{theorem}\label{thm Betti}
Let $(M^n,g)$ be a closed Riemannian manifold of dimension $n\geq 5$ and $2\leq p \leq \frac{n}{2}$. Set
\begin{equation}\label{eq A_{n,p} def}
A_{n,p}:= \frac{2(n-1)(np+n-p^2)}{2(n-1)(n-2p)(n-p+1)+(n-p)(n+2)(n-p+2)}.
%\frac{2(n-1)(np+n-p^2)  }{3n^3+n^2(4-8p)+n(5p^2-4p+2)-2p^2}.
\end{equation}
\begin{enumerate}
\item If $(M,g)$ satisfies $\mathring{R} \in \mathring{\mathcal{C}}\left(\frac{n+2}{2}, A_{n,p}\right)$, then the $p$-th Betti number $b_p(M,\R)$ vanishes.  
\item If $(M,g)$ satisfies $\mathring{R} \in \mathcal{C}\left(\frac{n+2}{2}, \theta \right)$ for some $\theta < A_{n,p}$, then either $b_p(M,\R)$ vanishes or $(M,g)$ is flat.
\item If $(M,g)$ satisfies $\mathring{R} \in \mathcal{C}\left(\frac{n+2}{2}, A_{n,p}\right)$, then all harmonic $p$-forms are parallel. 
\end{enumerate}
\end{theorem}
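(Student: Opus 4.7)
My plan is to extend the Bochner-technique proof of \cite{NPW22} from $\theta=0$ to the sharp $\theta=A_{n,p}$. For a harmonic $p$-form $\omega$ the classical Bochner formula gives
$$0=\int_M\!\bigl(|\nabla\omega|^2+\langle\mathfrak{K}_p\omega,\omega\rangle\bigr),$$
where $\mathfrak{K}_p$ is the Weitzenb\"ock curvature endomorphism on $\wedge^p T^*M$. The whole proof reduces to producing the pointwise lower bound
$$\langle\mathfrak{K}_p\omega,\omega\rangle\;\geq\;K_{n,p}\,(A_{n,p}-\theta)\,\bar\lambda\,|\omega|^2$$
with $K_{n,p}>0$, valid whenever $\mathring{R}\in\mathcal{C}(\tfrac{n+2}{2},\theta)$.

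To obtain this, at a fixed point I would rewrite $\mathfrak{K}_p$ via the NPW Bochner identity, which expresses $\langle\mathfrak{K}_p\omega,\omega\rangle$ as the evaluation of $\mathring{R}$ on a distinguished family of traceless symmetric two-tensors built pointwise from $\omega$, and then invoke the weight principle \cite[Theorem~3.6]{NPW22} to write
$$\langle\mathfrak{K}_p\omega,\omega\rangle=\sum_{i=1}^N\mu_i\,\lambda_i,\qquad \mu_i\geq 0,$$
for the ordered eigenvalues $\lambda_1\leq\cdots\leq\lambda_N$ of $\mathring{R}$, with an explicit ceiling on each $\mu_i$ and an explicit total weight $\sum_i\mu_i$ depending only on $n$, $p$ and $|\omega|^2$. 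Splitting the sum at $\a=\tfrac{n+2}{2}$, using the hypothesis $\lambda_1+\cdots+\lambda_\a\geq-\theta\a\bar\lambda$ on the first block and $\lambda_i\geq\bar\lambda$ on the second, and assigning the maximal admissible weight to the smallest block, produces a bound of the shape $[\mathrm{coef}(n,p,\theta)]\cdot\bar\lambda\,|\omega|^2$. Requiring this coefficient to vanish gives a linear equation in $\theta$ whose unique solution is the constant in \eqref{eq A_{n,p} def}, yielding the claimed pointwise estimate.

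With this estimate in hand, the three conclusions follow. By Proposition \ref{prop Ricci nonnegative} (with $\a=\tfrac{n+2}{2}$, after checking $A_{n,p}\leq\bar\Theta_{n,(n+2)/2}$) the hypothesis forces $\Ric\geq 0$, hence $\Scal\geq 0$ and $\bar\lambda\geq 0$ on $M$. Part (3), $\theta=A_{n,p}$: the integrand is nonnegative, so the integrated Bochner formula forces $\nabla\omega\equiv 0$. Part (1), $\mathring{R}\in\mathring{\mathcal{C}}(\tfrac{n+2}{2},A_{n,p})$: compactness of $M$ promotes the estimate to a strict inequality at some point, contradicting $\omega\not\equiv 0$. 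Part (2), $\theta<A_{n,p}$: for any nonzero harmonic $p$-form $\omega$ the pointwise bound forces both $\nabla\omega\equiv 0$ and $\bar\lambda\,|\omega|^2\equiv 0$, so $|\omega|$ is a positive constant and $\Scal\equiv 0$. Together with $\Ric\geq 0$ this gives $\Ric\equiv 0$, and the boundary rigidity step for $\frac{n+2}{2}$-nonnegativity from \cite{NPW22} then upgrades $\Ric\equiv 0$ to $R\equiv 0$, i.e.\ $(M,g)$ is flat.

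The main obstacle is the combinatorial optimization that produces $A_{n,p}$. The NPW weight principle supplies the weighted-sum reformulation, but squeezing a sharp lower bound in terms of $\bar\lambda$, rather than the coarse $\geq 0$ bound used at $\theta=0$, requires tracking precisely the amount of weight that can be packed onto the smallest $\tfrac{n+2}{2}$ eigenvalues; it is this weight budget that pins down the somewhat unwieldy formula in \eqref{eq A_{n,p} def}.
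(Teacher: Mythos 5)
Your overall strategy — the integrated Bochner formula, the Nienhaus--Petersen--Wink identity for the curvature term, and the weight principle — does match the paper's approach, and the logic by which the three conclusions follow from a pointwise estimate is sound. But the sketch of the estimate itself has a genuine gap. Identity \eqref{eq Bochner curvature term} does \emph{not} reduce $\langle\Ric_L\omega,\omega\rangle$ to a single weighted sum $\sum_i\mu_i\lambda_i$ over the eigenvalues of $\mathring{R}$. It decomposes the curvature term into three pieces,
\begin{equation*}
\tfrac{3}{2}g(\Ric_L(\omega),\omega)=\sum_{\alpha=1}^N\lambda_\alpha|S_\alpha\omega|^2+\tfrac{p(n-2p)}{n}\Ric(\omega,\omega)+\tfrac{p^2}{n^2}S|\omega|^2,
\end{equation*}
and the weight principle governs only the first. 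The scalar term converts to a multiple of $\bar\lambda\sum_\alpha|S_\alpha\omega|^2$ via \eqref{eq omega norm}, but the Ricci term $\Ric(\omega,\omega)=\frac{1}{p}\sum_I\bigl(\sum_{i\in I}R_{ii}\bigr)\omega_I^2$ is not an evaluation of $\mathring{R}$ on tensors built from $\omega$, and the cone hypothesis gives it no direct control. Bounding this term is the real technical content of the proof: one needs a new estimate (Proposition \ref{prop Ricci p}) showing that $\mathring{R}\in\mathcal{C}\bigl(\tfrac{(n-1)p}{2},\theta\bigr)$ forces $\sum_{i=1}^pR_{ii}\geq\tfrac{(n-1)p}{n-p+2}\bigl(1-(n-p+1)\theta\bigr)\bar\lambda$ for every orthonormal $p$-frame, proved by partitioning an auxiliary quantity into orthonormal groups of exactly $\tfrac{(n-1)p}{2}$ diagonal evaluations of $\mathring{R}$, with three parity cases for $p$ and $n$ handled separately. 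The constant $A_{n,p}$ arises by combining this Ricci estimate with \eqref{eq omega norm}; it is not pinned down by the weight budget on $\sum_\alpha\lambda_\alpha|S_\alpha\omega|^2$ alone. (One also uses that $\tfrac{(n-1)p}{2}\geq\tfrac{n+2}{2}$ for $p\geq 2$, $n\geq 5$, so the hypothesis enters this proposition via Proposition \ref{prop cone monotonicity}.)

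A secondary slip: you propose to obtain $\Ric\geq 0$ from Proposition \ref{prop Ricci nonnegative} ``after checking $A_{n,p}\leq\bar\Theta_{n,(n+2)/2}$,'' but that inequality is false — in fact $A_{n,p}>\tfrac{2}{n+2}=\bar\Theta_{n,(n+2)/2}$ for all $2\leq p\leq\tfrac{n}{2}$, which is precisely why Theorem \ref{thm Betti} is stronger than Theorem \ref{thm homology sphere}. You do not need $\Ric\geq 0$: the facts that $\bar\lambda\geq 0$ and that $\bar\lambda\equiv 0$ forces $R\equiv 0$ are already content of Proposition \ref{prop basic cones}, which the paper invokes directly in part (2) instead of your Ricci-flat rigidity detour.
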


Note that $A_{n,p}$ increases as $p\leq \frac{n}{2}$ increases and the weakest curvature condition occurs when $p=\frac{n}{2}$ with $A_{n,\frac{n}{2}}=\frac{2(n-1)}{n+2}$.  We show that this weakest condition is sufficient for Einstein manifolds. 
\begin{theorem}\label{thm homology sphere Einstein}
Let $(M^n,g)$ be a closed Einstein manifold of dimension $n\geq 4$. 
\begin{enumerate}
\item If $(M,g)$ satisfies $\mathring{R} \in \mathring{\mathcal{C}}\left(\frac{n+2}{2}, \frac{2(n-1)}{(n+2)}\right)$, then $M$ is a rational homology sphere.  
\item If $(M,g)$ satisfies $\mathring{R} \in  \mathcal{C}\left(\frac{n+2}{2}, \theta \right)$ for some $-1 < \theta < \frac{2(n-1)}{(n+2)}$, then $M$ is either flat or a rational homology sphere.
\item If $(M,g)$ satisfies $\mathring{R} \in  \mathcal{C}\left(\frac{n+2}{2}, \frac{2(n-1)}{(n+2)}\right)$, then all harmonic $p$-forms are parallel.
\end{enumerate}
\end{theorem}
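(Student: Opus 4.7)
The plan is to build on the Bochner framework of Nienhaus--Petersen--Wink \cite{NPW22} and the proof strategy of Theorem \ref{thm Betti}, using the Einstein hypothesis to replace the $p$-dependent threshold $A_{n,p}$ by the single value $A_{n,n/2} = \frac{2(n-1)}{n+2}$. By Poincar\'e duality (applied to $\R$-Betti numbers, and to the Hodge star for the ``parallel harmonic forms'' statement), it suffices to treat $1 \leq p \leq n/2$. At $p = n/2$, a direct computation in \eqref{eq A_{n,p} def} shows $A_{n,n/2} = \frac{2(n-1)}{n+2}$, so the three conclusions at $p = n/2$ follow immediately from Theorem \ref{thm Betti}.

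The core of the argument is the range $1 \leq p < n/2$, where $A_{n,p} < \frac{2(n-1)}{n+2}$ and Theorem \ref{thm Betti} does not apply directly. Starting from the standard Weitzenb\"ock identity
$$0 = \int_M |\nabla \omega|^2 + \int_M \langle \mathcal{F}_p \omega, \omega \rangle$$
for a harmonic $p$-form $\omega$, I would split the Weitzenb\"ock curvature term $\mathcal{F}_p$ into a Ricci piece and a piece depending only on the full Riemann tensor. Under the Einstein condition $\Ric = \frac{\scal}{n} g$, the Ricci piece reduces to a constant multiple of the identity, with prefactor proportional to $\frac{p(n-p)}{n}\scal$. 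The remaining Riemann-tensor piece can be reexpressed through $\mathring{R}$ via the NPW curvature identity (see \eqref{eq Bochner} and \eqref{eq Bochner curvature term}) and estimated using their weight principle \cite[Theorem 3.6]{NPW22}. The goal is to verify that after this Einstein simplification, the inequality that guarantees $\mathcal{F}_p \geq 0$ becomes $p$-independent and reduces precisely to the condition $\mathring{R} \in \mathring{\mathcal{C}}(\frac{n+2}{2}, \frac{2(n-1)}{n+2})$.

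For part (2), with the nonstrict inequality, the Bochner identity forces either $\omega \equiv 0$ (in which case $b_p = 0$) or $\nabla \omega = 0$. In the latter case, a nonzero parallel $p$-form with $1 \leq p \leq n-1$ yields reducible holonomy, and combining the resulting de Rham splitting with the boundary condition $\mathring{R} \in \partial \mathcal{C}(\frac{n+2}{2}, \theta)$ and the Einstein property should force $M$ to be flat (any nontrivial de Rham factor would violate the strict interior condition on at least one factor, or be incompatible with the Einstein equation unless the scalar curvature vanishes). Part (3) is the equality case: saturating the estimates in the Bochner identity gives $\nabla \omega = 0$ directly.

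The main obstacle, and the step I expect to require the most care, is the algebraic verification that the Einstein reduction indeed collapses the family $\{A_{n,p}\}_{1 \leq p \leq n/2}$ to the single value $A_{n,n/2}$. Concretely, one must match the $p(n-p)/n$ factor coming from the Einstein Ricci contribution against the $p$-dependent weights produced by the NPW weight principle, and show that they combine into a $p$-independent eigenvalue inequality for $\mathring{R}$. This mirrors the computation that produces $A_{n,p}$ in the general case, but should simplify decisively under the Einstein hypothesis because the traceless Ricci component of the curvature tensor vanishes, eliminating the cross-term that is responsible for the $p$-dependence of $A_{n,p}$.
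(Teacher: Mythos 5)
Your strategy is correct and is essentially the paper's proof: you correctly identify that substituting $\Ric = \frac{S}{n}g$ and $S = n(n-1)\bar\lambda$ into the Nienhaus--Petersen--Wink Bochner formula \eqref{eq Bochner curvature term}, together with the norm identity \eqref{eq omega norm}, collapses the curvature term into $\frac{3}{2}g(\Ric_L\omega,\omega)=\sum_\alpha\bigl(\lambda_\alpha + \frac{2(n-1)}{n+2}\bar\lambda\bigr)|S_\alpha\omega|^2$ with a $p$-independent coefficient, after which the weight principle of \cite[Theorem~3.6]{NPW22} and the maximum principle finish parts (3), (1), and the bulk of (2). The algebraic verification you flag as the main obstacle is in fact short: the Ricci and scalar contributions combine to $\frac{S}{n^2}p(n-p)|\omega|^2=\frac{(n-1)p(n-p)}{n}\bar\lambda|\omega|^2$, and the factor $p(n-p)$ is precisely cancelled by \eqref{eq omega norm}.

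However, your proposed finish for part (2) has a genuine flaw. You claim that a nonzero parallel $p$-form with $1\leq p\leq n-1$ forces reducible holonomy and hence a de Rham splitting. This is false in general: the K\"ahler $2$-form on a K\"ahler manifold, the fundamental $4$-form on a quaternion-K\"ahler manifold, and the parallel $3$-form on a $G_2$-manifold are all nonzero parallel forms on holonomy-irreducible manifolds. So the de Rham-splitting route does not go through without substantial further argument. The paper avoids holonomy entirely: once $\omega$ is parallel, \eqref{eq Bochner} gives $\sum_\alpha\bigl(\lambda_\alpha+\frac{2(n-1)}{n+2}\bar\lambda\bigr)|S_\alpha\omega|^2=0$, which one rewrites as $\sum_\alpha(\lambda_\alpha+\theta\bar\lambda)|S_\alpha\omega|^2 + \bigl(\frac{2(n-1)}{n+2}-\theta\bigr)\bar\lambda\sum_\alpha|S_\alpha\omega|^2=0$ with both summands nonnegative; pointwise this forces $\bar\lambda=0$ or $\omega=0$, and since an Einstein manifold of dimension $\geq 3$ has constant scalar curvature, $\bar\lambda=0$ at one point gives $\bar\lambda\equiv 0$ and flatness by Proposition~\ref{prop basic cones}, while otherwise $\bar\lambda>0$ everywhere and $\omega\equiv 0$. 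You should replace your holonomy step with this direct dichotomy.
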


We point out that the number $\frac{2(n-1)}{(n+2)}$ in Theorem \ref{thm homology sphere Einstein} is the best possible, as both $\mathbb{CP}^{\frac{n}{2}}$ (with the Fubini-Study metric) and $\mathbb{S}^{k}\times \mathbb{S}^{n-k}$ (with $2\le k\leq \frac{n}{2}$ and the product metric being Einstein) satisfy $\mathring{R} \in \partial \mathcal{C}(\frac{n+2}{2}, \frac{2(n-1)}{n+2})$. See Example \ref{example complex projective space} and Example \ref{example SXS}.

It is also interesting to study K\"ahler manifolds satisfying the condition $\mathring{R} \in \mathcal{C}(\a,\theta)$. By \cite{BK78} (see also Example \ref{example complex projective space}), $(\mathbb{CP}^m, g_{FS})$ satisfies $\partial \mathcal{C}(\a,B_{m,\a})$, where
\begin{equation}\label{eq def Xi_{m,a}}
    B_{m,\a} := \begin{cases}
        \frac{2m-1}{m+1}, & 1\leq \a \leq m^2-1;\\
        \frac{2m-1}{m+1}\frac{3(m^2-1)-2\a}{\a}, & m^2-1 \leq \a < (2m-1)(m+1).
    \end{cases}
\end{equation}
It has been shown (see \cite[Theorem 1.9]{Li21}, \cite[Theorem C]{NPWW22}, and \cite[Theorem 1.2]{Li22Kahler}) that a K\"ahler manifold of complex dimension $m\geq 2$ satisfying either $\mathring{R} \in \mathcal{C}(\a,0)$ or $-\mathring{R} \in \mathcal{C}(\a,0)$ is flat if $1\leq \a < \frac{3}{2}(m^2-1)$ and has constant holomorphic sectional curvature if $\a=\frac{3}{2}(m^2-1)$. Here we prove the following optimal extension to all $\a \in [1, (2m-1)(m+1))$, except $\a =m^2-1$.   
\begin{theorem}\label{thm Kahler flat}
Let $(M^m,g)$ be a K\"ahler manifold of complex dimension $m\geq 2$. Let $1 \leq \a  < (2m-1)(m+1)$ and $B_{m,\a}$ be defined as in \eqref{eq def Xi_{m,a}}. 
\begin{enumerate}
    \item If $\a \neq m^2-1$ and $(M,g)$ satisfies $\mathring{R} \in \mathcal{C}(\a, B_{m,\a})$ (respectively $-\mathring{R} \in \mathcal{C}(\a, B_{m,\a})$), then $(M,g)$ has constant nonnegative (respectively, nonpositive) holomorphic sectional curvature. 
   % \item If $(M,g)$ satisfies $-\mathring{R} \in \mathcal{C}(\a, B_{m,\a})$. Then $(M,g)$ has constant nonpositive holomorphic sectional curvature. 
    \item If $(M,g)$ satisfies either $\mathring{R} \in \mathcal{C}(\a, \theta)$ or $-\mathring{R} \in \mathcal{C}(\a, \theta)$ for some $\theta < B_{m,\a}$, then $(M,g)$ is flat. 
\end{enumerate}
\end{theorem}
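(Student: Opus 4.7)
The plan is to adapt the test-tensor approach used by the author for the $\theta = 0$ case in \cite{Li22Kahler}, taking $(\mathbb{CP}^m, g_{FS})$ as the sharp model. Because $(M, g)$ is K\"ahler, at each point $p \in M$ the trace-free symmetric two-tensors split $J$-equivariantly as
\[
S^2_0(T_p M) = S^2_{0,+}(T_p M) \oplus S^2_{-}(T_p M),
\]
of real dimensions $m^2 - 1$ and $m^2 + m$ respectively, and $\mathring R$ preserves this splitting. On $(\mathbb{CP}^m, g_{FS})$ with holomorphic sectional curvature $c$, the Bourguignon--Karcher computation recalled in Example \ref{example complex projective space} shows that $\mathring R$ acts by a single scalar on each summand, with the extremal eigenspace being $S^2_{0,+}$ of dimension $m^2 - 1$. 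The formula for $B_{m, \alpha}$ is engineered so that $\mathring R \in \partial \mathcal C(\alpha, B_{m, \alpha})$ on this model, and the piecewise break at $\alpha = m^2 - 1$ records the transition from drawing all $\alpha$ test tensors from the extremal eigenspace to having to include tensors from $S^2_-$ as well.

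The first step is to fix $p$ and a $J$-adapted orthonormal frame $\{e_1, Je_1, \ldots, e_m, Je_m\}$ of $T_p M$, and to construct explicit orthonormal bases $\{\sigma_i\}_{i=1}^{m^2-1}$ of $S^2_{0,+}$ and $\{\tau_j\}_{j=1}^{m^2+m}$ of $S^2_-$ modeled on the $\mathbb{CP}^m$ eigentensors. For $1 \leq \alpha \leq m^2-1$ the average $\alpha^{-1}\sum_{i \leq [\alpha]}\langle \mathring R(\sigma_i), \sigma_i\rangle$ (with the appropriate fractional correction for non-integer $\alpha$) is an upper bound for $\alpha^{-1}(\lambda_1 + \cdots + \lambda_\alpha)$ in \eqref{eq C2K}; for $m^2 - 1 \leq \alpha < N$, one uses all of the $\sigma_i$ together with an additional $\alpha - (m^2 - 1)$ of the $\tau_j$. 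Expanding each $\langle \mathring R(\sigma_i), \sigma_i\rangle$ and $\langle \mathring R(\tau_j), \tau_j\rangle$ pointwise in the K\"ahler curvature components, and using that $\bar\lambda = \mathrm{tr}(\mathring R)/N$ is a known constant multiple of $\mathrm{Scal}(p)$, the hypothesis $\mathring R \in \mathcal C(\alpha, B_{m,\alpha})$ becomes a pointwise algebraic inequality among the components of $R_p$.

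After the appropriate algebraic rearrangement, this inequality should take the form $\|R - R_c\|^2 \leq 0$, where $R_c$ denotes the curvature tensor of constant holomorphic sectional curvature $c$ with $c$ chosen so that $R$ and $R_c$ have the same scalar curvature at $p$. This forces $R_p = R_{c(p)}$ for every $p$; the second Bianchi identity then forces $c$ to be constant on $M$, proving part (1). The dichotomy between nonnegative and nonpositive HSC reflects whether $\mathring R$ or $-\mathring R$ is taken in the cone, since the trace relation pins down the sign of $\mathrm{Scal}$, hence of $c$, in each case. The exclusion of $\alpha = m^2 - 1$ is inherent to the strategy: at the crossover value both branches of $B_{m,\alpha}$ agree and the extremizing configuration on the model ceases to be rigid, so one controls only a strictly weaker invariant than $\|R - R_c\|^2$. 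For part (2), a strict hypothesis $\theta < B_{m, \alpha}$ yields a strict version of the derived inequality, which is compatible with $R = R_c$ only if $c = 0$, so that $(M, g)$ is flat.

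The principal technical obstacle is identifying the weighted combination of $\langle \mathring R(h_a), h_a\rangle$ that rearranges into $\|R - R_c\|^2$; this is the algebraic heart of the argument, following the template of \cite[\S3]{Li22Kahler} with the weights now dictated by $B_{m,\alpha}$ and requiring separate careful treatment on each side of $\alpha = m^2 - 1$. Once the correct combination is in place the conclusion follows pointwise, so no analytic machinery such as Ricci flow or a Bochner formula is needed---only the algebraic structure of K\"ahler curvature tensors.
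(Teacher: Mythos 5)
Your high-level outline is in the right spirit---$J$-invariant decomposition of $S^2_0$, $\mathbb{CP}^m$ as the rigidity model, testing $\mathring R$ on eigentensors modeled on the projective space---but the algebraic core of your argument diverges from what the paper actually does, and the divergence is where the substance lives. You propose to rearrange the hypothesis into an inequality of the form $\|R - R_c\|^2 \leq 0$. The paper does \emph{not} do this, and you yourself flag "identifying the weighted combination" as the unresolved step. That unresolved step is precisely the proof.

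What the paper actually does is an equality-chain argument, not a complete-the-square argument. With $E^- \subset S^2_0(V)$ the $(m^2-1)$-dimensional summand (your $S^2_{0,+}$; note the paper's sign convention is opposite to yours), the identity \eqref{eq 8.11} shows that the trace of $\mathring R$ over the explicit orthonormal basis of $E^-$ is \emph{exactly} $-(m-1)(2m-1)\bar\lambda$---this is a tautology coming from the trace decomposition, not an inequality to be massaged. On the other hand, $\lambda_1 + \cdots + \lambda_{m^2-1}$ is bounded above by this same trace (since it is the smallest $m^2-1$ diagonal entries in some orthonormal basis), and bounded below by $-\alpha B_{m,\alpha}\bar\lambda$ by hypothesis. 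For $\alpha < m^2-1$, Lemma \ref{lemma average} stretches the lower bound to $\lambda_1 + \cdots + \lambda_{m^2-1}$ at the cost of an inequality; for $\alpha > m^2-1$ a similar stretching applies on the $E^+$ side. The two bounds coincide numerically, so every intermediate inequality is forced to be an equality. The equality case of Lemma \ref{lemma average} then pins down the eigenvalues, and applying $\mathring R$ to the specific tensors $\xi = \tfrac14(e_i\odot e_i + Je_i\odot Je_i - e_j\odot e_j - Je_j\odot Je_j) \in E^-$ via \eqref{eq 8.10}, \eqref{eq 8.13}, \eqref{eq 8.4} directly yields $R(e_i, Je_i, e_i, Je_i)$ constant, i.e.\ pointwise constant HSC, followed by Schur's lemma. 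No norm-squared identity is needed or used.

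So the gap is concrete: your proposal hinges on an algebraic identity ("after rearrangement the inequality becomes $\|R - R_c\|^2 \le 0$") that you have not produced, and which plays no role in the actual proof. There is also no reason to expect such an identity to fall out cleanly for fractional or non-extremal $\alpha$; the equality-chain mechanism handles all $\alpha$ uniformly precisely because it only uses the trace identity plus Lemma \ref{lemma average}, whereas a genuine sum-of-squares rearrangement would likely need to be recomputed for each regime of $\alpha$. Your explanation of why $\alpha = m^2-1$ is excluded is also imprecise: the obstruction is not that one "controls a weaker invariant," but that $\mathbb{CP}^k \times \mathbb{CP}^{m-k}$ satisfies $\mathring R \in \partial\mathcal C(m^2-1, \tfrac{2m-1}{m+1})$ without having constant HSC, so the conclusion is simply false at that value. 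Finally, for part (2) the paper's route is immediate from the same chain: $\theta < B_{m,\alpha}$ forces $\bar\lambda = 0$, hence scalar flat, hence $R = 0$ by Proposition \ref{prop basic cones}; this is cleaner than extracting "compatible only with $c = 0$" from an unproven rearrangement.
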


We point out that part (1) of Theorem \ref{thm Kahler flat} fails for $\a=m^2-1$, as $\mathbb{CP}^{k}\times \mathbb{CP}^{m-k}$ satisfied $\mathring{R} \in \partial \mathcal{C}(m^2-1, \frac{2m-1}{m+1})$ (see Example \ref{example CPXCP}). The case of K\"ahler surfaces (i.e., $m=2$) can be alternatively proved using the normal form of the curvature operator of the second kind in real dimension four discovered in \cite{CGT23}, in a similar way as in \cite{Li22PAMS}.

Given that the Ricci flow is the most powerful tool in proving differentiable sphere theorems and thus is a possible approach to Question A, we conclude this section with the following question.

\begin{question}
For what values of $\a \in [1,N)$ and $\theta >-1$ does the Ricci flow (on closed manifolds) preserve the curvature condition $\mathring{R} \in \mathcal{C}(\a,\theta)$? 
\end{question}

Fluck and the author \cite[Proposition 5.3]{FL24} proved that three-dimensional Ricci flows on closed manifolds preserve the condition $\mathring{R} \in \mathcal{C}(\a,\theta)$ for all $\a \in [1,5)$ and $\theta > -1$. It remains an interesting question in higher dimensions $n\geq 4$. 

\section{Preliminaries}

\subsection{Notation and Conventions}

Let $(V,g)$ be a real Euclidean vector space of dimension $n \geq 3$ and $\{e_i\}_{i=1}^n$ be an orthonormal basis of $V$. We always identify $V$ with its dual space $V^*$ via the inner product $g$. 

Denote by $\wedge^2(V)$, $S^2(V)$, and $S^2_0(V)$ the spaces of two-forms, symmetric two-tensors, and traceless symmetric two-tensors on $V$, respectively. Note that $S^2(V)$ splits into $O(V)$-irreducible subspaces as
    \begin{equation*}
        S^2(V)=S^2_0(V)\oplus \R g. 
    \end{equation*}
The tensor product $\otimes$ is defined by 
\begin{equation*}
        (e_i\otimes e_j)(e_k,e_l)=\delta_{ik}\delta_{jl}.
    \end{equation*}
The symmetric product $\odot$ and the wedge product $\wedge$ are defined by 
 \begin{equation*}
        e_i \odot e_j=e_i\otimes e_j +e_j \otimes e_i,
    \end{equation*}
    and 
\begin{equation*}
        e_i \wedge e_j=e_i\otimes e_j - e_j \otimes e_i,
    \end{equation*}
respectively. 
The inner product on $\wedge^2(V)$ is given by 
    \begin{equation*}
        \langle A, B \rangle =\frac{1}{2}\tr(A^T B),
    \end{equation*}
so $\{e_i \wedge e_j\}_{1\leq i<j\leq n}$ is an orthonormal basis of $\wedge^2(V)$. 
The inner product on $S^2(V)$ is given by 
    \begin{equation*}
        \langle A, B \rangle =\tr(A^T B),
    \end{equation*}
so $\{\frac{1}{\sqrt{2}}e_i \odot e_j\}_{1\leq i<j\leq n} \cup \{\frac{1}{2}e_i \odot e_i\}_{1\leq i\leq n}$ is an orthonormal basis of $S^2(V)$. 

$S^2(\wedge^2 V)$, the space of symmetric two-tensors on $\wedge^2(V)$, has the orthogonal decomposition 
\begin{equation*}
    S^2(\wedge^2 V) =S^2_B(\wedge^2 V) \oplus \wedge^4 V,
\end{equation*}
where $S^2_B(\wedge^2 V)$ consists of all tensors $R\in S^2(\wedge^2(V))$ that also satisfy the first Bianchi identity. $S^2_B(\wedge^2 V)$ is called the space of algebraic curvature operators (or tensors). 

\subsection{Curvature Operator of the Second Kind}
Given $R\in S^2_B(\wedge^2(V))$, the induced symmetric operator $\hat{R}:\wedge^2 (V) \to \wedge^2(V)$ given by 
    \begin{equation*}
        \hat{R}(\omega)_{ij}=\frac{1}{2}\sum_{k,l=1}^n R_{ijkl}\omega_{kl}, 
    \end{equation*}
is called the curvature operator (or the curvature operator of the first kind by Nishikawa \cite{Nishikawa86}).

By the symmetries of $R\in S^2_B(\wedge^2(V))$, $R$ also induces a symmetric operator $\overline{R}:S^2(V) \to S^2(V)$ via 
\begin{equation*}
    \overline{R}(\vp)_{ij}=\sum_{k,l=1}^n R_{iklj}\vp_{kl}. 
\end{equation*}
However, the nonnegativity of $\overline{R}$ is too strong in the sense that $\overline{R}$ is nonnegative if and only if $\overline{R}=0$. 
The curvature operator of the second kind, following Nishikawa's terminology \cite{Nishikawa86}, refers to the symmetric operator 
$$\mathring{R} = \pi \circ \overline{R}: S^2_0(V) \to  S^2_0(V),$$
where $\pi: S^2(V) \to S^2_0(V)$ is the projection map.

We collect some known properties of $\mathring{R}$. 

\begin{proposition}\label{prop basic C2K}
Let $R \in S^2_B(\wedge^2 V)$ and $\mathring{R}$ be its induced curvature operator of the second kind. Then
\begin{enumerate}
    \item $\tr(\mathring{R}) = \frac{n+2}{2n}S$, where $S$ denotes the scalar curvature of $R$.
    \item $\mathring{R}=\id_{S^2_0(V)}$ if $R$ has constant sectional curvature $1$.
    \item $\mathring{R}(\vp,\psi) =\overline{R}(\vp,\psi)$ for $\vp, \psi \in S^2_0(V)$. 
    \item If $\mathring{R}$ is two-positive, then $R$ has positive sectional curvatures.   
    \item If $\mathring{R}$ is positive, then $R$ has positive complex sectional curvature. 
    \item If $n\geq 4$ and $\mathring{R}$ is $3$-positive, then the expression 
    $$R_{1313}+\l^2 R_{1414}+R_{2323}+\l^2R_{2424} -2\l R_{1234}$$ is positive for all orthonormal four frame $\{e_1,e_2,e_3,e_4\}$ and all $\l \in [-1,1]$. 
    \item If $n\geq 4$ and $\mathring{R}$ is $4\frac{1}{4}$-positive, then $R$ has positive isotropic curvature. 
    \item If $\mathring{R}$ is $\left(n+\frac{n-2}{n}\right)$-positive, then $R$ has positive Ricci curvature.
\end{enumerate}
Moreover, the statements in (4)-(8) remain true if ``positive" is replaced by ``nonnegative", or ``nonpositive", or ``negative". 
\end{proposition}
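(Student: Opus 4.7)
The plan is to verify the eight items in turn. Items (1)--(3) are algebraic identities which I would settle by direct computation on a fixed orthonormal basis of $S^2(V)$. For (1), using the orthonormal basis $\{e_i\otimes e_i\}_i\cup\{(e_i\odot e_j)/\sqrt{2}\}_{i<j}$, a short expansion gives $\langle\overline{R}(e_i\otimes e_i),e_i\otimes e_i\rangle=R_{iiii}=0$ and $\langle\overline{R}((e_i\odot e_j)/\sqrt{2}),(e_i\odot e_j)/\sqrt{2}\rangle=R_{ijij}$, hence $\tr\overline{R}|_{S^2(V)}=\sum_{i<j}R_{ijij}=S/2$. Splitting $S^2(V)=S^2_0(V)\oplus\R g$ and noting $\overline{R}(g)=-\Ric$, the Rayleigh quotient of $g/\sqrt{n}$ equals $-S/n$, so subtracting yields $\tr\mathring{R}=\tfrac{(n+2)S}{2n}$. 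Item (2) is a one-line substitution of $R_{iklj}=\delta_{il}\delta_{kj}-\delta_{ij}\delta_{kl}$ into the definition of $\overline{R}$, and item (3) follows because $\overline{R}(\vp)-\pi\circ\overline{R}(\vp)$ is a multiple of $g$, which is annihilated by any traceless $\psi$.

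For (4)--(8) my uniform strategy is to build an orthonormal family $\{\vp_1,\ldots,\vp_k\}\subset S^2_0(V)$ whose cumulative Rayleigh sum under $\mathring{R}$ equals, up to a positive multiplicative constant, the curvature quantity to be bounded below; the $k$-positivity hypothesis together with the variational inequality $\sum_i\langle\mathring{R}\vp_i,\vp_i\rangle\geq\lambda_1+\cdots+\lambda_k$ then yields the conclusion. For (4), the pair $\vp_1=(e_i\otimes e_i-e_j\otimes e_j)/\sqrt{2}$ and $\vp_2=(e_i\odot e_j)/\sqrt{2}$ are orthonormal in $S^2_0(V)$ with $\mathring{R}(\vp_a,\vp_a)=R_{ijij}$ for $a=1,2$, so two-positivity forces $2R_{ijij}\geq\lambda_1+\lambda_2>0$. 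For (5), the same construction extends to the complexification: given a complex two-plane spanned by $\zeta_1,\zeta_2$, a Hermitian-symmetric traceless tensor built from $\zeta_i$ and $\bar\zeta_j$ realizes the complex sectional curvature $R(\zeta_1,\zeta_2,\bar\zeta_1,\bar\zeta_2)$ as a positive multiple of a Rayleigh quotient of $\mathring{R}$, and positivity of $\mathring{R}$ finishes.

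For (6), I would use a three-element orthonormal family drawn from $(e_1\odot e_3+\lambda\, e_2\odot e_4)/\sqrt{2(1+\lambda^2)}$, $(e_1\odot e_4-\lambda\, e_2\odot e_3)/\sqrt{2(1+\lambda^2)}$, and a third suitably normalized tensor, whose combined Rayleigh sum is a positive multiple of $R_{1313}+\lambda^2 R_{1414}+R_{2323}+\lambda^2 R_{2424}-2\lambda R_{1234}$; three-positivity then concludes. Item (7) follows by specializing $\lambda=1$ in (6) and adjoining one more orthonormal tensor with fractional weight $\tfrac{1}{4}$ to produce precisely the isotropic-curvature expression, giving the $4\tfrac{1}{4}$-positivity bound in the spirit of Cao--Gursky--Tran. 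For (8), for a fixed $e_i$ I would take the $n-1$ orthonormal tensors $(e_i\odot e_j)/\sqrt{2}$, $j\neq i$, whose Rayleigh sum equals $\Ric_{ii}$, adjoin the normalized diagonal traceless tensor proportional to $e_i\otimes e_i-\tfrac{1}{n-1}\sum_{j\neq i}e_j\otimes e_j$, and combine the resulting sum with the scalar-curvature trace identity from (1) to eliminate the residual $S$-term; a count of weights then matches the effective $(n+\tfrac{n-2}{n})$-positivity hypothesis via a Ky-Fan-type weighted minmax inequality.

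The main obstacle will be items (6)--(8), where the linear combinations must be tuned precisely to match the stated sharp constants and where the variational characterization must be invoked with fractional, non-integer weights. The concluding clause of the proposition is then immediate: each of the arguments above compares a (possibly weighted) Rayleigh sum to a (weighted) sum of eigenvalues in a sign-neutral manner, so every implication is preserved verbatim when ``positive'' is replaced by ``nonnegative,'' ``negative,'' or ``nonpositive.''
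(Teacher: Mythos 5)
Your reconstruction of items (1)--(4) is correct and more detailed than the paper, which dispatches (1)--(2) to the literature and gives only the one-line projection argument for (3); your computation for (4) with $\vp_1=(e_i\otimes e_i-e_j\otimes e_j)/\sqrt{2}$ and $\vp_2=(e_i\odot e_j)/\sqrt{2}$, both giving $\mathring{R}(\vp_a,\vp_a)=R_{ijij}$, matches what \cite{Li21} does. Your outline of (5) is plausible but unverified. The paper, by contrast, simply cites \cite[Proposition~4.1]{Li21}, \cite[Theorem~1.5]{CGT23}, and \cite[Theorems~1.5--1.6]{Li22JGA} for (4)--(8), so your attempt to give direct proofs is a genuinely different route in presentation; whether it succeeds turns on whether your tensor choices work.

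They do not, for (6) and (7). With $\vp_1=(e_1\odot e_3+\lambda e_2\odot e_4)/\sqrt{2(1+\lambda^2)}$ and $\vp_2=(e_1\odot e_4-\lambda e_2\odot e_3)/\sqrt{2(1+\lambda^2)}$, one computes from \eqref{eq C2K calculation} and Bianchi that
\begin{equation*}
\mathring{R}(\vp_1,\vp_1)+\mathring{R}(\vp_2,\vp_2)=\tfrac{1}{1+\lambda^2}\left[R_{1313}+R_{1414}+\lambda^2(R_{2323}+R_{2424})-6\lambda R_{1234}\right],
\end{equation*}
which is \emph{not} proportional to the target $R_{1313}+\lambda^2R_{1414}+R_{2323}+\lambda^2R_{2424}-2\lambda R_{1234}$: the $\lambda$-weights on $R_{1414}$ and $R_{2323}$ are swapped and the cross term carries coefficient $-6\lambda$ rather than $-2\lambda$. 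More fundamentally, your premise that ``the combined Rayleigh sum is a positive multiple of the target'' cannot hold for any third unit tensor $\vp_3$: in the paper's own four-dimensional computation in the proof of Proposition~\ref{prop PIC} (which is exactly the $\lambda=1$ specialization of (6)), the correct triple $\vp_1=\tfrac14(e_1\odot e_1+e_2\odot e_2-e_3\odot e_3-e_4\odot e_4)$, $\vp_2=\tfrac12(e_1\odot e_4-e_2\odot e_3)$, $\vp_3=\tfrac12(e_1\odot e_3+e_2\odot e_4)$ gives
\begin{equation*}
\sum_{i=1}^{3}\mathring{R}(\vp_i,\vp_i)=R_{1313}+R_{1414}+R_{2323}+R_{2424}-\tfrac12(R_{1212}+R_{3434})-3R_{1234},
\end{equation*}
which picks up extraneous $R_{1212},R_{3434}$ terms and an incorrect $R_{1234}$ coefficient. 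One must bring in the trace identity (i.e.\ the relation between $\bar\lambda$ and $S$) as an additional inequality before the isotropic-curvature expression emerges; it is not a bare Rayleigh-sum comparison. You are missing both the diagonal tensor and this extra step, and (7) inherits the same gap. For (8), your outline is essentially the paper's Proposition~\ref{prop Ricci} in the case $\theta=0$, $\a=n+\tfrac{n-2}{n}$; the idea (adjoin the diagonal traceless tensor, absorb the $-\bar\lambda$ term, apply the weighted Ky Fan bound) is correct, but you have not carried out the weight bookkeeping, and as the paper's computation shows ($\mathring{R}(\vp_1,\vp_1)=\tfrac{2}{n-1}R_{11}-\bar\lambda$, $\sum_{i\geq 2}\mathring{R}(\vp_i,\vp_i)=R_{11}$) the residual $-\bar\lambda$ must be handled with care to recover exactly the threshold $n+\tfrac{n-2}{n}$.
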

\begin{proof}
(1) and (2) are well-known. See \cite{Li21} or \cite{NPW22}. \\
(3). This says the symmetric bilinear form induced by $\mathring{R}$ is the same as the restriction to $S^2_0(V)$ of the symmetric bilinear form induced by $\overline{R}$. It can be seen as
$$\mathring{R}(\vp,\psi) = \langle (\pi \circ \overline{R})(\vp), \psi \rangle = \langle \overline{R}(\vp), \psi \rangle -\frac{\tr(\overline{R}(\vp))}{n} \langle g, \psi \rangle  = \overline{R}(\vp,\psi).$$ \\
(4)-(6). See \cite[Proposition 4.1]{Li21}.\\
(7). See \cite[Theorem 1.5]{CGT23} and \cite[Theorem 1.5]{Li22JGA}.\\
(8). See \cite[Theorem 1.6]{Li22JGA}.
\end{proof}

Next, we collect several examples on which the eigenvalues of the curvature operator of the second kind are known explicitly (see \cite{Li22product} for more such examples). These examples are used to demonstrate the sharpness of our results.

\begin{example}\label{example cylinder}
The eigenvalues of $\mathring{R}$ on $\mathbb{S}^{n-1}\times \mathbb{S}^1$ (with the standard product metric) are given by $-\frac{n-2}{n}$ with multiplicity $1$, $0$ with multiplicity $n-1$, and $1$ multiplicity $\frac{(n-2)(n+1)}{2}$; see \cite[Example 2.6]{Li21}. The curvature operator of the second kind of $\mathbb{S}^{n-1}\times \mathbb{S}^1$ lies on $\partial \mathcal{C}(\a,\bar{\Theta}_{n,\a})$, where $\bar{\Theta}_{n,\a}$ is defined in \eqref{eq theta cylinder}. 
\end{example}

\begin{example}\label{example complex projective space}
Bourguignon and Karcher \cite{BK78} computed that the eigenvalues of $\mathring{R}$ on $(\mathbb{CP}^m, g_{FS})$, the complex projective space with the Fubini-Study metric normalized with constant holomorphic sectional curvature $4$, are given by $-2$ with multiplicity $m^2-1$ and $4$ with multiplicity $m(m+1)$. The curvature operator of the second kind of $(\mathbb{CP}^m, g_{FS})$ lies on $\partial \mathcal{C}(\a,B_{m,\a})$, where $B_{m,\a}$ is defined in \eqref{eq def Xi_{m,a}}.
\end{example}

\begin{example}\label{example SXS}
Let $\mathbb{S}^n(\kappa)$ denote the $n$-sphere with constant sectional curvature $\k>0$. According to \cite[Example 2.7]{Li22product}, the eigenvalues of $\mathring{R}$ on $\mathbb{S}^{k}(\k_1)\times \mathbb{S}^{n-k}(\k_2)$ with the standard product metric are given by $-\frac{k(n-k-1)\k_2+(n-k)(k-1)\k_1}{n}$ with multiplicity $1$, $0$ with multiplicity $k(n-k)$, $\k_1$ with multiplicity $\frac{(k-1)(k+2)}{2}$, and $\k_2$ with multiplicity $\frac{(n-k-1)(n-k+2)}{2}$.  
If $\k_1$ and $\k_2$ satisfy $(k-1)\k_1 =(n-k-1)\k_2$ and $2\leq k \leq \frac{n}{2}$, then $\mathbb{S}^{k}(\k_1)\times \mathbb{S}^{n-k}(\k_2)$ is an Einstein manifold and its curvature operator of the second kind lies on $\partial \mathcal{C}(\frac{n+2}{2}, \frac{2(n-1)}{n+2})$. 
\end{example}

\begin{example}\label{example CPXCP}
It was computed in \cite[Example 2.12]{Li22product} that the eigenvalues of $\mathring{R}$ on $\mathbb{CP}^{k}\times \mathbb{CP}^{m-k}$ are given by $-2-\frac{4k(m-k)}{m}$ with multiplicity $1$, $-2$ with multiplicity $k^2+(m-k)^2-2$, $0$ with multiplicity $4k(m-k)$, and $4$ with multiplicity $k(k+1)+(m-k)(m-k+1)$. Here each factor is normalized such that its sectional curvatures lie in the interval $[1,4]$. 
One verifies that the curvature operator of the second kind of $\mathbb{CP}^{k}\times \mathbb{CP}^{m-k}$ lies on $\partial \mathcal{C}(m^2-1, \frac{2m-1}{m+1})$. 

\end{example}

The following identities are useful for calculations. 
\begin{proposition}\label{prop C2K calculation}
Let $(V,g)$ be a Euclidean vector space of dimension $n\geq 3$ and $\{e_i\}_{i=1}^n$ be an orthonormal basis of $V$. Then 
\begin{equation}\label{eq C2K calculation}
    \overline{R}(e_i \odot e_j, e_k \odot e_l) = 2(R_{iklj}+R_{ilkj})
\end{equation}
for all $1\leq i, j ,k ,l \leq n$. In particular, 
\begin{equation*}\label{eq C2K calculation ring}
    \mathring{R}(e_i \odot e_j, e_k \odot e_l) = 2(R_{iklj}+R_{ilkj})
\end{equation*}
for all $1\leq i, j ,k ,l \leq n$ satisfying $i\neq j$ and $k \neq l$. 
\end{proposition}
\begin{proof}
This is a straightforward calculation. See \cite[Lemma 3.1]{Li22Kahler} \footnote{$\mathring{R}$ should be replaced by $\overline{R}$ in \cite[Lemma 3.1]{Li22Kahler}.}. 
\end{proof}

\subsection{An elementary lemma}
The following elementary lemma will be used frequently. 
\begin{lemma}\label{lemma average}
Let $L$ be a positive integer and $A$ be a collection of $L$ real numbers.  Denote by $a_i$ the $i$-th smallest number in $A$ for $1\leq i \leq L$.  
Define a function $f(A,x)$ by 
   \begin{equation*}
       f(A,x)=a_1 + \cdots +a_{\lfloor{x}\rfloor} +(x-\lfloor{x}\rfloor) a_{\lfloor{x}\rfloor+1}, 
   \end{equation*}
for $x\in [1,L]$. Then we have
   \begin{equation}\label{eq function}
       f(A,x) \leq x \bar{a}, 
   \end{equation}
   where $\bar{a}:=\frac{1}{L}\sum_{i=1}^L a_i$ is the average of all numbers in $A$. 
   Moreover, the equality holds for some $x\in [1,L)$ if and only if $a_i=\bar{a}$ for all $1\leq i\leq L$. 
\end{lemma}

\subsection{The cones}
Recall that for $\a \in [1,N)$ and $\theta >-1$, we defined in Definition \ref{def cone} that 
\begin{equation*}
\mathcal{C}(\a,\theta) =\left\{ \mathring{R} \in S^2(S^2_0(V)) : \mathring{R} \text{ satisfies } \eqref{eq C2K} \right\}.
\end{equation*}
The interior and boundary of $\mathcal{C}(\a,\theta)$ are denoted by $\mathring{\mathcal{C}}(\a,\theta)$ and $\partial \mathcal{C}(\a,\theta)$, respectively.

We prove some basic properties of $\mathcal{C}(\a,\theta)$. 
\begin{proposition}\label{prop basic cones}
Let $R \in S^2_B(\wedge^2 V)$ and $\mathring{R}$ be its induced curvature operator of the second kind. Denote by $S$ the scalar curvature of $R$. 
\begin{enumerate}
    \item If $\mathring{R} \in \mathcal{C}(\a,\theta)$, then $S\geq 0$. Moreover, $S=0$ implies $\mathring{R}=0$ and $R=0$. 
    \item If $-\mathring{R} \in \mathcal{C}(\a,\theta)$, then $S\leq 0$. Moreover, $S=0$ implies $\mathring{R}=0$ and $R=0$. 
    \item If $\mathring{R} \in \mathring{\mathcal{C}}(\a,\theta)$, then $S > 0$. 
    \item If $-\mathring{R} \in \mathring{\mathcal{C}}(\a,\theta)$, then $S < 0$. 
\end{enumerate}
\end{proposition}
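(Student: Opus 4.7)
The plan is to reduce all four parts to the single elementary inequality
\begin{equation*}
\a^{-1}(\l_1+\cdots+\l_\a) \leq \bar{\l}, \qquad \a \in [1,N],
\end{equation*}
with equality forcing $\l_1=\cdots=\l_N$. For integer $\a$ this is just the standard statement that the average of the smallest $\a$ eigenvalues is at most the overall average. For fractional $\a=k+t$ with $k=[\a]$ and $t\in(0,1)$, the expression $\a^{-1}(\l_1+\cdots+\l_\a)$ is a convex combination of $\tfrac{1}{k}\sum_{i=1}^{k}\l_i$ and $\tfrac{1}{k+1}\sum_{i=1}^{k+1}\l_i$, each already bounded by $\bar{\l}$ from the integer case.

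For part (1), combining this with the defining cone inequality $\a^{-1}(\l_1+\cdots+\l_\a) \geq -\theta \bar{\l}$ yields $(1+\theta)\bar{\l} \geq 0$, and since $\theta>-1$ I conclude $\bar{\l}\geq 0$. Proposition \ref{prop basic C2K}(1) states $\tr\mathring{R}=\tfrac{n+2}{2n}S$, which combined with $\tr\mathring{R}=N\bar{\l}$ gives $\bar{\l}=\tfrac{S}{n(n-1)}$, so $S\geq 0$. If $S=0$, then $\bar{\l}=0$, the two bounds collapse to equalities, and the equality case forces $\l_1=\cdots=\l_N=0$, i.e., $\mathring{R}=0$. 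To upgrade this to $R=0$, I would apply Proposition \ref{prop basic C2K}(4) to both $\mathring{R}$ and $-\mathring{R}$: since $\mathring{R}=0$ is simultaneously $2$-nonnegative and $2$-nonpositive, the sectional curvatures of $R$ are both $\geq 0$ and $\leq 0$, forcing $R=0$. Part (3) is the identical argument with strict inequalities, producing $(1+\theta)\bar{\l}>0$ and hence $S>0$.

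Parts (2) and (4) follow by applying parts (1) and (3) to $-R$, whose induced curvature operator of the second kind is $-\mathring{R}$ and whose scalar curvature is $-S$. The one point deserving some care is verifying the equality case of the elementary inequality when $\a$ is non-integer: combining $\sum_{i=1}^N \l_i=0$ with $\sum_{i=1}^{k}\l_i+t\l_{k+1}=0$ and the ordering $\l_1\leq\cdots\leq\l_N$ (so that $\sum_{i=k+2}^N \l_i=-(1-t)\l_{k+1}\geq (N-k-1)\l_{k+1}$) forces first $\l_{k+1}\leq 0$, then $\l_1=\cdots=\l_{k+1}=0$, and finally $\l_{k+2}=\cdots=\l_N=0$. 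Everything else is routine.
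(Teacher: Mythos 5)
Your proof is correct and follows essentially the same route as the paper's: both rest on the elementary inequality $\a^{-1}(\l_1+\cdots+\l_\a)\le\bar\l$ (Lemma~\ref{lemma average}) and the trace identity $\bar\l=S/(n(n-1))$, with equality analysis handling the rigidity case. You spell out two steps the paper leaves implicit --- the equality case of Lemma~\ref{lemma average} for non-integer $\a$, and the passage from $\mathring R=0$ to $R=0$ via Proposition~\ref{prop basic C2K}(4) and the fact that an algebraic curvature tensor with vanishing sectional curvatures is zero --- but the underlying argument is the same.
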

\begin{proof}
(1) $\tr(\mathring{R})=\frac{n+2}{2n}S$ implies that $\bar{\l}=\frac{S}{n(n-1)}$.
By Lemma \ref{lemma average}, we have 
\begin{equation}\label{eq 2.5}
    \a^{-1}(\l_1 + \cdots +\l_\a) \leq \bar{\l}
\end{equation}
for $\a \in [1,N)$. Moreover, the equality holds if and only if $\l_1= \cdots =\l_N = \bar{\l}$. 

If $\mathring{R} \in \mathcal{C}(\a,\theta)$, then we have $-\theta \bar{\l} \leq \bar{\l}$. Since $\theta >-1$, we must have $\bar{\l} \geq 0$ and therefore $S\geq 0$. Moreover, $\bar{\l}=0$ implies equality in \eqref{eq 2.5}, which forces $\mathring{R}=0$ and then $R=0$. 

(2)-(4). These immediately follow from (1). 
\end{proof}

\begin{proposition}\label{prop cone monotonicity} 
The cones $\mathcal{C}(\a,\theta)$ satisfy the following inclusions:
\begin{enumerate}
    \item If $\a_1 \leq \a_2$, then $\mathcal{C} (\a_1, \theta ) \subset \mathcal{C} (\a_2,\theta)$;
    \item If $\theta_1 \leq \theta_2$, $\mathcal{C} (\a, \theta_1 ) \subset \mathcal{C} (\a,\theta_2)$.
\end{enumerate}

\end{proposition}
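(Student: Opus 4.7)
The plan is to handle both inclusions by directly unpacking the defining inequality \eqref{eq C2K}, with Lemma \ref{lemma average} as the only serious input. Recall that lemma asserts $\alpha^{-1}(\lambda_1+\cdots+\lambda_\alpha)\leq \bar\lambda$ for every $\alpha\in[1,N]$; I will write $f(\alpha):=\alpha^{-1}(\lambda_1+\cdots+\lambda_\alpha)$ throughout, so that the condition $\mathring R\in\mathcal{C}(\alpha,\theta)$ reads simply $f(\alpha)\geq -\theta\bar\lambda$.

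For part (2), the first step is to observe that any $\mathring R\in\mathcal{C}(\alpha,\theta_1)$ automatically satisfies $\bar\lambda\geq 0$: combining $f(\alpha)\geq -\theta_1\bar\lambda$ with Lemma \ref{lemma average} gives $(1+\theta_1)\bar\lambda\geq 0$, and $\theta_1>-1$ yields the claim. Granted this, $\theta_1\leq\theta_2$ implies $-\theta_2\bar\lambda\leq -\theta_1\bar\lambda\leq f(\alpha)$, so $\mathring R\in\mathcal{C}(\alpha,\theta_2)$.

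For part (1), the main step is to show that $f$ is non-decreasing on $[1,N)$. On an open interval $(k,k+1)$ with integer $k\geq 1$, the paper's convention gives $\alpha f(\alpha)=\lambda_1+\cdots+\lambda_k+(\alpha-k)\lambda_{k+1}$, and a short differentiation yields
\begin{equation*}
\alpha^{2} f'(\alpha) = \alpha\lambda_{k+1}-\alpha f(\alpha) = k\lambda_{k+1}-(\lambda_1+\cdots+\lambda_k)\geq 0,
\end{equation*}
since $\lambda_i\leq \lambda_{k+1}$ for $i\leq k$. As $f$ is continuous at the integer points, $f$ is non-decreasing on all of $[1,N)$. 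Hence if $\mathring R\in\mathcal{C}(\alpha_1,\theta)$ and $\alpha_1\leq \alpha_2$, then $f(\alpha_2)\geq f(\alpha_1)\geq -\theta\bar\lambda$, which is exactly $\mathring R\in\mathcal{C}(\alpha_2,\theta)$.

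Both parts are formal monotonicity consequences of \eqref{eq C2K} together with Lemma \ref{lemma average}, so I do not foresee any real obstacle; note also that the argument does not require $\mathring R$ to come from an algebraic curvature operator.
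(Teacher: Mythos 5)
Your proof is correct and follows the same approach as the paper, which simply cites Lemma~\ref{lemma average} for part (1) and the fact that $\bar\lambda\geq 0$ for part (2); you have just filled in the details that the paper leaves implicit. The only cosmetic difference is that you prove the monotonicity of $\alpha\mapsto\alpha^{-1}(\lambda_1+\cdots+\lambda_\alpha)$ by differentiating on each interval $(k,k+1)$, whereas one could equally obtain it by applying Lemma~\ref{lemma average} to the truncated list $\{\lambda_1,\dots,\lambda_{[\alpha_2]+1}\}$.
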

\begin{proof}
(1). This follows from Lemma \ref{lemma average}.\\
(2). This is because $\bar{\l}\geq 0$. 
\end{proof}

\section{Ricci Curvature}

In this section, we establish implications of $\mathring{R} \in \mathcal{C}(\a,\theta)$ on the Ricci curvature and then use them to prove Theorem \ref{thm 3D} and Proposition \ref{prop Ricci nonnegative}.

\begin{proposition}\label{prop Ricci}
Let $R \in S^2_B(\wedge^2 V)$ and denote by $\mathring{R}$ its induced curvature operator of the second kind. 
\begin{enumerate}
    \item If $\mathring{R} \in \mathcal{C}(\a, \theta)$ with $1\leq \a \leq n$, then 
    \begin{equation*}
        \Ric \geq \frac{n-1}{\a+1}(1-\a \theta ) \bar{\l} g.
    \end{equation*}
    \item If $\mathring{R} \in \mathcal{C}(\a, \theta)$ with $n \leq \a < N$, then 
    \begin{equation*}
        \Ric \geq (n-1)\frac{n^2-n(\a \theta +\a -1) +2(\a \theta -1)}{n^2+n-2(\a+1)}  \bar{\l} g.
    \end{equation*}
\end{enumerate}
Moreover, strict inequalities hold if we assume $\mathring{R}\in \mathring{\mathcal{C}}(\a, \theta)$.
\end{proposition}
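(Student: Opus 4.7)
The plan is to apply the cone condition $\mathring{R} \in \mathcal{C}(\a,\theta)$ to carefully chosen trace-$\a$ operators on $S^2_0(V)$, built from the eigentensors of $\mathring{R}$ on the model space $\mathbb{S}^{n-1}\times \mathbb{S}^1$, in such a way that $\tr(\mathring{R} P)$ becomes an affine combination of $\Ric(e_n,e_n)$ and $S$. Since $e_n$ will be arbitrary, the resulting linear inequality translates directly into the claimed Ricci lower bound as a quadratic form.

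Fix an arbitrary unit vector $e_n \in V$ and extend it to an orthonormal basis $\{e_i\}_{i=1}^n$. Motivated by Example~\ref{example cylinder}, introduce the orthonormal set $\{\phi_i\}_{i=1}^n \subset S^2_0(V)$ defined by $\phi_i = \frac{1}{\sqrt{2}}(e_i\otimes e_n + e_n \otimes e_i)$ for $1\leq i\leq n-1$ and $\phi_n = \frac{1}{\sqrt{n(n-1)}}\left((n-1)e_n\otimes e_n - \sum_{i<n} e_i \otimes e_i\right)$, and write $W_n = \spn\{\phi_1,\ldots,\phi_n\}$ with orthogonal complement $W_n^\perp$ in $S^2_0(V)$ (of dimension $N-n$). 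Using Proposition~\ref{prop C2K calculation} together with the symmetries of the Riemann tensor, one computes
\begin{equation*}
\mathring{R}(\phi_i,\phi_i) = R_{inin}\quad (i<n), \qquad \mathring{R}(\phi_n,\phi_n) = \frac{2n\,\Ric(e_n,e_n) - S}{n(n-1)},
\end{equation*}
from which $\tr(\mathring{R}|_{W_n}) = \frac{n+1}{n-1}\Ric(e_n,e_n) - \frac{S}{n(n-1)}$. Combining this with Proposition~\ref{prop basic C2K}(1) yields $\tr(\mathring{R}|_{W_n^\perp}) = \frac{(n+1)(S - 2\,\Ric(e_n,e_n))}{2(n-1)}$.

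Next I invoke the Ky Fan-type variational characterization
\begin{equation*}
\l_1 + \cdots + \l_\a = \min\{\tr(\mathring{R} P) : P = P^T,\ 0 \leq P \leq \id,\ \tr P = \a\},
\end{equation*}
valid for every $\a \in [1,N]$ with the fractional convention from Definition~\ref{def cone} (the minimum being attained, after simultaneous diagonalization, by placing the weight $1$ on the $[\a]$ smallest eigenvalues and the weight $\a-[\a]$ on the next). For case~(1) with $1\leq \a \leq n$, apply this to $P_1 = \pi_{\phi_n} + \frac{\a-1}{n-1}\sum_{i<n}\pi_{\phi_i}$; for case~(2) with $n\leq \a < N$, apply it to $P_2 = \pi_{W_n} + \frac{\a-n}{N-n}\pi_{W_n^\perp}$. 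Both satisfy $0\leq P \leq \id$ and $\tr P = \a$ in their respective ranges, so the cone condition gives $\tr(\mathring{R} P) \geq -\theta\a\bar{\l}$. Substituting the trace formulas from the previous paragraph and using $S = n(n-1)\bar{\l}$ converts each inequality into a linear relation in $\Ric(e_n,e_n)$; solving yields precisely the bounds stated in (1) and (2). The strict-inequality case is immediate, since $\mathring{R}\in \mathring{\mathcal{C}}(\a,\theta)$ makes the cone inequality strict.

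The principal difficulty is arranging that the chosen operator $P$ captures $\Ric(e_n,e_n)$ cleanly rather than an uncontrolled mixture of the individual sectional curvatures $R_{inin}$; this is ensured by weighting each $\phi_i$ ($i<n$) equally in $P$, so that $\sum_{i<n}\frac{\a-1}{n-1} R_{inin} = \frac{\a-1}{n-1}\Ric(e_n,e_n)$. The special role of $\phi_n$, whose Rayleigh quotient packages $\Ric(e_n,e_n)$ and $S$ in exactly the ratio prescribed by the $-\frac{n-2}{n}$ eigenvalue on $\mathbb{S}^{n-1}\times \mathbb{S}^1$, is precisely what makes the threshold $\theta = \bar{\Theta}_{n,\a}$ saturate the bound to $\Ric\geq 0$, recovering Proposition~\ref{prop Ricci nonnegative} as an immediate corollary.
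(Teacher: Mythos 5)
Your proof is correct and takes essentially the same route as the paper: you choose the same orthonormal family of traceless symmetric two-tensors (the eigentensors of $\mathring R$ on the model $\mathbb{S}^{n-1}\times\mathbb{S}^1$), and your test operators $P_1$, $P_2$ produce exactly the same final inequalities that the paper reaches by first applying a Ky Fan-type step and then averaging via Lemma~\ref{lemma average}. The only cosmetic difference is that you bundle the reorder-and-average step into a single invocation of the variational characterization of $\l_1+\cdots+\l_\a$, which is a tidy repackaging rather than a new idea.
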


Proposition \ref{prop Ricci nonnegative} follows from Proposition \ref{prop Ricci} by taking $\theta =\bar{\Theta}_{n,\a}$. In addition, we note that Proposition \ref{prop Ricci} recovers several previous results.  

\begin{corollary}[\cite{Li21}, part (2) of Proposition 4.1]
If $\mathring{R}$ is $n$-nonnegative, then $\Ric \geq \frac{S}{n(n+1)} \geq 0$. 
\end{corollary}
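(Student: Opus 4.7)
The plan is to deduce this corollary directly from part (1) of Proposition~\ref{prop Ricci} by specializing to $\alpha=n$ and $\theta=0$. The hypothesis that $\mathring{R}$ is $n$-nonnegative is exactly the condition $\mathring{R}\in \mathcal{C}(n,0)$, so the proposition applies. Substituting yields
\[
\Ric \;\geq\; \frac{n-1}{n+1}\,\bar{\lambda}\, g \;=\; \frac{S}{n(n+1)}\, g,
\]
where the last equality uses $\bar{\lambda}=S/(n(n-1))$, itself a consequence of Proposition~\ref{prop basic C2K}(1). The nonnegativity $S\geq 0$ then follows from Proposition~\ref{prop basic cones}(1), which is immediate from Lemma~\ref{lemma average} together with the assumption $\theta=0>-1$.

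A self-contained proof also previews the argument for Proposition~\ref{prop Ricci}(1) and proceeds as follows. Fix $p\in M$ and a unit vector $e_n\in T_pM$, extended to an orthonormal basis $\{e_i\}_{i=1}^n$. Inspired by the eigentensor decomposition of $\mathring{R}$ on the model space $\mathbb{S}^{n-1}\times \mathbb{S}^1$ from Example~\ref{example cylinder}, I would introduce the $n$ tensors in $S^2_0(V)$
\[
\widetilde{\varphi}_0 \;=\; \sqrt{\tfrac{n}{n-1}}\Bigl(e_n\otimes e_n - \tfrac{1}{n}g\Bigr), \qquad v_a \;=\; \tfrac{1}{\sqrt{2}}\, e_a\odot e_n \qquad (1\le a\le n-1).
\]
A direct check shows these are orthonormal in $S^2_0(V)$, and Proposition~\ref{prop C2K calculation} together with the definitions yields
\[
\langle \mathring{R} v_a, v_a\rangle \;=\; K(e_a,e_n), \qquad \langle \mathring{R}\widetilde{\varphi}_0, \widetilde{\varphi}_0\rangle \;=\; \frac{2\,\Ric(e_n,e_n)}{n-1} - \frac{S}{n(n-1)},
\]
so the total Rayleigh sum equals $\frac{(n+1)\Ric(e_n,e_n)}{n-1} - \frac{S}{n(n-1)}$.

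By Ky Fan's variational characterization, $\lambda_1+\cdots+\lambda_n$ is bounded above by this sum, while $n$-nonnegativity of $\mathring{R}$ bounds it below by $0$; rearranging gives $\Ric(e_n,e_n)\geq \frac{S}{n(n+1)}$, and $S\geq 0$ is obtained as above. The only genuinely nontrivial point is the choice of test tensors: they are essentially dictated by the requirement that the combined Rayleigh sum depend only on $\Ric(e_n,e_n)$ and $S$, with no residual dependence on individual sectional curvatures $K(e_a,e_n)$. This is precisely the role played by the eigenbasis of $\mathring{R}$ on $\mathbb{S}^{n-1}\times \mathbb{S}^1$, and it is what makes the inequality sharp on that space.
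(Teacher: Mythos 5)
Your first paragraph is exactly the paper's proof: take $\alpha = n$, $\theta = 0$ in Proposition~\ref{prop Ricci}(1) and use $\bar{\lambda} = S/(n(n-1))$. The self-contained second half is a correct specialization of the paper's proof of Proposition~\ref{prop Ricci}(1) to $\alpha = n$ — your $\widetilde{\varphi}_0, v_a$ are the same test tensors (your $\widetilde{\varphi}_0 = \sqrt{n/(n-1)}(e_n\otimes e_n - \tfrac{1}{n}g)$ is the paper's $\varphi_1$ rewritten), with the minor simplification that for $\alpha = n$ all $n$ test tensors are used so the averaging step via Lemma~\ref{lemma average} is unnecessary.
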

\begin{proof}
Take $\a=n$ and $\theta=0$ in Proposition \ref{prop Ricci}.
\end{proof}

\begin{corollary}[\cite{Li22JGA}, Theorem 1.6]
If $\mathring{R}$ is $\left(n+\frac{n-2}{n}\right)$-nonnegative, then the Ricci curvature is nonnegative.
\end{corollary}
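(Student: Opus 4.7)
The plan is simply to deduce this corollary as a direct specialization of Proposition \ref{prop Ricci}. The hypothesis ``$\mathring{R}$ is $\a$-nonnegative'' with $\a = n + \frac{n-2}{n}$ is exactly the statement $\mathring{R} \in \mathcal{C}(\a, 0)$ with this choice of $\a$. Since $\a = \frac{n^2 + n - 2}{n} > n$, we are in the regime where part (2) of Proposition \ref{prop Ricci} applies, and we set $\theta = 0$ there.

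With these substitutions, the lower bound provided by Proposition \ref{prop Ricci}(2) reads
\begin{equation*}
\Ric \geq (n-1)\frac{n^2 - n(\a - 1) - 2}{n^2 + n - 2(\a+1)} \bar{\l}\, g.
\end{equation*}
The main computation is to verify that the numerator $n^2 - n\a + n - 2$ vanishes for $\a = n + \frac{n-2}{n}$: indeed $n\a = n^2 + n - 2$, so $n^2 - n\a + n - 2 = 0$. Meanwhile the denominator factors as $\frac{(n-1)(n-2)(n+2)}{n}$, which is strictly positive for $n\geq 3$, so no division-by-zero issue arises. Hence the coefficient in the bound is $0$, and I would also note that Proposition \ref{prop basic cones}(1) guarantees $\bar{\l} = \frac{S}{n(n-1)} \geq 0$, so the right-hand side is the zero bilinear form and $\Ric \geq 0$ follows.

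There is no real obstacle here; the argument is an immediate numerical check once Proposition \ref{prop Ricci} is in hand. The only thing worth highlighting is the consistency check that this recovers, as a sanity test, the known $\a = n$ corollary: substituting $\a = n$, $\theta = 0$ into part (2) yields the coefficient $\frac{n-1}{n+1}$, matching the bound $\Ric \geq \frac{S}{n(n+1)} g$ quoted immediately above.
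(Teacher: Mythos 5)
Your proof is correct and is exactly the paper's approach: substitute $\a = n + \frac{n-2}{n}$ and $\theta = 0$ into part (2) of Proposition \ref{prop Ricci}. The paper leaves the algebraic verification implicit; your explicit check that the numerator $n^2 - n\a + n - 2$ vanishes (while the denominator stays positive) fills that in, and the remark about $\bar{\l}\geq 0$, while harmless, is not needed once the coefficient is seen to be zero.
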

\begin{proof}
Take $\a=n+\frac{n-2}{n}$ and $\theta=0$ in Proposition \ref{prop Ricci}.
\end{proof}

We give the proof of Proposition \ref{prop Ricci}. 
\begin{proof}[Proof of Proposition \ref{prop Ricci}]
(1). Let $\{e_i\}_{i=1}^n$ be an orthonormal basis of $V$. Then
\begin{equation*}
    \vp_1 =\tfrac{1}{2\sqrt{n(n-1)}} \left( (n-1)e_1 \odot e_1 -\sum_{p=2}^n e_p \odot e_p \right)
\end{equation*}
and 
\begin{equation*}
    \vp_i =\tfrac{1}{\sqrt{2}} e_1 \odot e_i, \text{ for } 2\leq i \leq n
\end{equation*}
form an orthonormal subset of $S^2_0(V)$ of dimension $n$. 
We may reorder $\vp_i$ for $2\leq i \leq n$ so that
\begin{equation*}
\mathring{R}(\vp_2,\vp_2) \leq \cdots \leq \mathring{R}(\vp_n,\vp_n). 
\end{equation*}
By Lemma \ref{lemma average}, we have
\begin{equation}\label{eq 3.5}
    \sum_{i=2}^{\lfloor{\a}\rfloor}\mathring{R}(\vp_{i},\vp_{i}) + (\a-\lfloor{\a}\rfloor)  \mathring{R}(\vp_{\lfloor{\a}\rfloor+1},\vp_{\lfloor{\a}\rfloor+1}) 
\leq  \frac{\a-1}{n-1} \sum_{i=2}^n \mathring{R}(\vp_{i},\vp_{i}),
\end{equation}
If $\a \in [1,n]$, then $\mathring{R} \in \mathcal{C}(\a, \theta)$ implies
\begin{eqnarray} \label{eq 3.0}
   -\a \theta \bar{\l} &\leq& \l_1+ \cdots +\l_{\a} \\ \nonumber
   &\leq & \sum_{i=1}^{\lfloor{\a}\rfloor}\mathring{R}(\vp_{i},\vp_{i}) + (\a-\lfloor{\a}\rfloor)  \mathring{R}(\vp_{\lfloor{\a}\rfloor+1},\vp_{\lfloor{\a}\rfloor+1}) \\ \nonumber
   &\leq & \mathring{R}(\vp_1,\vp_1) + \frac{\a-1}{n-1} \sum_{i=2}^n \mathring{R}(\vp_{i},\vp_{i}),
\end{eqnarray}
where we have used \eqref{eq 3.5}. 

Using \eqref{eq C2K calculation}, we calculate
\begin{eqnarray*}
   && 4n(n-1) \mathring{R}(\vp_1,\vp_1) \\
   &=& 4n(n-1) \overline{R}(\vp_1,\vp_1) \\
   &=& -2(n-1) \sum_{p=2}^n \overline{R}(e_1\odot e_1, e_p \odot e_p) + \sum_{p, q=2}^n \overline{R}(e_p\odot e_p, e_q \odot e_q) \\
   &=& 8(n-1) \sum_{p=2}^n R_{1p1p} -4 \sum_{p,q=2}^n R_{pqpq} \\
   &=& 8(n-1) R_{11} - 4(S-2R_{11}) \\
   &=& 8nR_{11}-4S,
\end{eqnarray*}
where $S$ denotes the scalar curvature. It follows, by noticing $\bar{\l}=\frac{S}{n(n-1)}$, that
\begin{equation}\label{eq 3.01}
    \mathring{R}(\vp_1,\vp_1) =\frac{2}{n-1}R_{11} - \bar{\l}. 
\end{equation}
Next, we compute
\begin{equation}\label{eq 3.02}
   \sum_{i=2}^n \mathring{R}(\vp_i,\vp_i) = \sum_{i=2}^n  R_{1i1i} =R_{11}.
\end{equation}
Substituting \eqref{eq 3.01} and \eqref{eq 3.02} into \eqref{eq 3.0}, we obtain
\begin{equation*}
 R_{11} \geq \frac{n-1}{\a+1}(1-\a \theta) \bar{\l}.
\end{equation*}
Since the orthonormal frame $\{e_i\}_{i=1}^n$ is arbitrary, we get the desired Ricci lower bound. 

(2). Extend $\{\vp_i\}_{i=1}^n$ in part (1) and to $\{\vp_i\}_{i=1}^N$, an orthonormal basis of $S^2_0(V)$. By reordering $\vp_i$ for $n+1\leq i \leq N$, we may assume that
\begin{equation*}
\mathring{R}(\vp_{n+1},\vp_{n+1}) \leq \cdots \leq \mathring{R}(\vp_N,\vp_N).
\end{equation*}
It follows from Lemma \ref{lemma average} that we have for $\a \in [n,N)$,
\begin{equation}\label{eq 3.20}
\sum_{i=n+1}^{\lfloor{\a}\rfloor}\mathring{R}(\vp_{i},\vp_{i}) + (\a-\lfloor{\a}\rfloor)  \mathring{R}(\vp_{\lfloor{\a}\rfloor+1},\vp_{\lfloor{\a}\rfloor+1}) 
\leq \frac{\a-n}{N-n} \sum_{i=n+1}^N \mathring{R}(\vp_{i},\vp_{i}).
\end{equation}
Here and in the rest of this paper, we use the convention that $\sum_{i=a}^b =0$ whenever $a > b$. 

Using $\mathring{R} \in \mathcal{C}(\a,\theta)$ with $\a \in [n,N)$ and \eqref{eq 3.20}, we obtain 
\begin{eqnarray*}
   -\a \theta \bar{\l} &\leq &   \l_1+ \cdots +\l_{\a} \\
   &\leq & \sum_{i=1}^{n}\mathring{R}(\vp_{i},\vp_{i}) +\sum_{i=n+1}^{\lfloor{\a}\rfloor}\mathring{R}(\vp_{i},\vp_{i}) + (\a-\lfloor{\a}\rfloor)  \mathring{R}(\vp_{\lfloor{\a}\rfloor+1},\vp_{\lfloor{\a}\rfloor+1}) \\
   &\leq & \sum_{i=1}^{n}\mathring{R}(\vp_{i},\vp_{i}) + \frac{\a-n}{N-n} \sum_{i=n+1}^N \mathring{R}(\vp_{i},\vp_{i}).
\end{eqnarray*}
Using \eqref{eq 3.01}, \eqref{eq 3.02}, and 
\begin{equation*}
\sum_{i=n+1}^N \mathring{R}(\vp_{i},\vp_{i}) =N\bar{\l} -   \sum_{i=1}^{n}\mathring{R}(\vp_{i},\vp_{i}) ,
\end{equation*}
we deduce that 
\begin{equation*}
\frac{(n+1)(N-\a)}{(n-1)(N-n)}R_{11} \geq   (1-\a \theta) \bar{\l} -\frac{\a-n}{N-n}(N+1)\bar{\l}.
\end{equation*}
It follows that
\begin{equation*}
    R_{11} \geq (n-1)\frac{n^2-n(\a \theta +\a -1) +2(\a \theta -1)}{n^2+n-2(\a+1)}  \bar{\l}. 
\end{equation*}
The Ricci lower bound follows immediately as the orthonormal frame $\{e_i\}_{i=1}^n$ is arbitrary. 

This finishes the proof. 
\end{proof}

\begin{remark}
The idea of the above proof is to use $\mathbb{S}^{n-1}\times \mathbb{S}^1$ as a model space. If $e_1$ is in the tangent space of the $\mathbb{S}^1$ factor, then the chosen $\{\vp_i\}_{i=1}^N$ are the eigentensors of $\mathring{R}$ on $\mathbb{S}^{n-1}\times \mathbb{S}^1$. 
\end{remark}

Next, we prove Theorem \ref{thm 3D}.

\begin{proof}[Proof of Theorem \ref{thm 3D}]
(1). By Proposition \ref{prop Ricci nonnegative}, if $(M,g)$ satisfies $\mathring{R} \in \mathring{\mathcal{C}}(\a, \bar{\Theta}_{3,\a})$, then $(M,g)$ has positive Ricci curvature. By Hamilton's famous work \cite{Hamilton82}, we conclude that $M$ is diffeomorphic to a spherical space form. 

(2). By Proposition \ref{prop Ricci nonnegative}, the assumption implies that $M$ has nonnegative Ricci curvature. The classification then follows from Hamilton's classification of closed three-manifolds with nonnegative Ricci curvature in \cite{Hamilton82, Hamilton86}. 
\end{proof}

Finally, we remark that, by Proposition \ref{prop Ricci nonnegative} and Liu's classification result \cite{Liu13}, a complete noncompact three-manifold satisfying $\mathring{R} \in \mathcal{C}(\a,\bar{\Theta}_{3,\a})$ is either diffeomorphic to $\R^3$ or its universal cover is isometric to $N^2 \times \R$, where $N^2$ is a complete surface with nonnegative scalar curvature.

\section{Isotropic Curvature}

In this section, we explore the implication of $\mathring{R} \in \mathcal{C}(\a,\theta)$ on the isotropic curvatures in dimension four. We first recall the definition of isotropic curvature.
\begin{definition}
Let $(V,g)$ be a Euclidean vector space of dimension $n\geq 4$. $R \in S^2_B(\wedge^2 V)$ is said to have nonnegative isotropic curvature if 
\begin{equation*}
    R_{1313}+R_{1414}+R_{2323}+R_{2424}-2R_{1234} \geq 0
\end{equation*}
for any orthonormal four-frame $\{e_1,e_2,e_3,e_4\} \subset V$.
If the strict inequality holds, then $R$ is said to have positive isotropic curvature. 
\end{definition}

The main result of this section states
\begin{proposition}\label{prop PIC}
Let $(V,g)$ be a Euclidean vector space of dimension $4$. Let $R \in S^2_B(\wedge^2 V)$ and denote by $\mathring{R}$ its induced curvature operator of the second kind. 
\begin{enumerate}
    \item If $\mathring{R} \in \mathcal{C}(\a,1)$ with $1\leq \a \leq 3$ or $\mathring{R} \in \mathcal{C}(\a,9\a^{-1}-2)$ with $3\leq \a < 9$, then $R$ has nonnegative isotropic curvature.
    \item If $\mathring{R} \in \mathring{\mathcal{C}}(\a,1)$ with $1\leq \a \leq 3$ or $\mathring{R} \in \mathring{\mathcal{C}}(\a,9\a^{-1}-2)$ with $3\leq \a < 9$, then $R$ has positive isotropic curvature. 
\end{enumerate}
\end{proposition}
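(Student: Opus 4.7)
The plan follows the framework of Section~3, using $\mathbb{CP}^2$ with the Fubini–Study metric as the model space in place of $\mathbb{S}^{n-1}\times\mathbb{S}^1$. By Example~\ref{example complex projective space}, the eigenvalues of $\mathring{R}$ on $\mathbb{CP}^2$ are $-2$ (with multiplicity $3$) and $4$ (with multiplicity $6$), so the thresholds $\theta=B_{2,\alpha}$ --- which read $1$ for $\alpha\in[1,3]$ and $9\alpha^{-1}-2$ for $\alpha\in[3,9)$ --- are saturated at $\mathbb{CP}^2$, and the isotropic curvature of $\mathbb{CP}^2$ vanishes on isotropic planes compatible with its complex structure.

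The key algebraic step is an identity for a specific orthonormal triple. Given an orthonormal $4$-frame $\{e_1,e_2,e_3,e_4\}\subset V$, set
\[
\varphi_1=\tfrac{1}{4}\bigl(e_1\odot e_1+e_2\odot e_2-e_3\odot e_3-e_4\odot e_4\bigr),\qquad \varphi_2=\tfrac{1}{2}\bigl(e_1\odot e_3+e_2\odot e_4\bigr),\qquad \varphi_3=\tfrac{1}{2}\bigl(e_1\odot e_4-e_2\odot e_3\bigr).
\]
A routine check shows that $\{\varphi_1,\varphi_2,\varphi_3\}$ is an orthonormal subset of $S^2_0(V)$ spanning the $J$-invariant traceless part of $S^2_0(V)$ for the complex structure $Je_1=e_2$, $Je_3=e_4$. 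Applying Proposition~\ref{prop C2K calculation} to expand each $\mathring{R}(\varphi_i,\varphi_i)$, using the first Bianchi identity to rewrite $R_{1243}$ and $R_{1423}$ in terms of $R_{1234}$ and $R_{1324}$, and using the scalar identity $R_{1212}+R_{1313}+R_{1414}+R_{2323}+R_{2424}+R_{3434}=S/2$, the spurious components $R_{1212}$, $R_{3434}$, $R_{1324}$ all cancel, and one obtains
\[
\sum_{i=1}^3 \mathring{R}(\varphi_i,\varphi_i)\;=\;\tfrac{3}{2}\bigl(R_{1313}+R_{1414}+R_{2323}+R_{2424}-2R_{1234}\bigr)-3\bar{\lambda}.
\]
As a sanity check, on $\mathbb{CP}^2$ the isotropic curvature vanishes and $\bar{\lambda}=2$, so the right-hand side is $-6$, matching the fact that each $\varphi_i$ is an eigentensor of $\mathring{R}$ with eigenvalue $-2$.

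For $\alpha\in[1,3]$ with $\theta=1$: Proposition~\ref{prop cone monotonicity}(1) gives $\mathcal{C}(\alpha,1)\subset\mathcal{C}(3,1)$, so it suffices to take $\alpha=3$. The hypothesis then reads $\lambda_1+\lambda_2+\lambda_3\geq -3\bar{\lambda}$, and since the trace of $\mathring{R}$ on the three-dimensional span of $\{\varphi_1,\varphi_2,\varphi_3\}$ bounds the sum of the three smallest eigenvalues from above, the identity gives $\tfrac{3}{2}\cdot(R_{1313}+R_{1414}+R_{2323}+R_{2424}-2R_{1234})\geq 0$. Under the strict hypothesis $\mathring{R}\in\mathring{\mathcal{C}}(\alpha,1)$, Proposition~\ref{prop basic cones} yields $\bar{\lambda}>0$ and the eigenvalue inequality is strict, producing positive isotropic curvature.

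For $\alpha\in[3,9)$ with $\theta=9\alpha^{-1}-2$: extend $\{\varphi_1,\varphi_2,\varphi_3\}$ to a complete orthonormal basis $\{\varphi_1,\ldots,\varphi_9\}$ of $S^2_0(V)$ and reorder $\{\varphi_4,\ldots,\varphi_9\}$ so that $\mathring{R}(\varphi_i,\varphi_i)$ is nondecreasing for $i\in\{4,\ldots,9\}$. Applying Lemma~\ref{lemma average} to these six values, together with the trace identity $\sum_{i=1}^9\mathring{R}(\varphi_i,\varphi_i)=9\bar{\lambda}$ and the pinching hypothesis $\lambda_1+\cdots+\lambda_\alpha\geq(2\alpha-9)\bar{\lambda}$ --- exactly in the style of the proof of Proposition~\ref{prop Ricci}(2) --- an elementary algebraic manipulation again reduces to $\sum_{i=1}^3\mathring{R}(\varphi_i,\varphi_i)\geq-3\bar{\lambda}$, and the identity above then delivers nonnegative isotropic curvature. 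Positivity follows by tracking strict inequalities throughout. The main obstacle is the clean execution of the algebraic identity in the second paragraph; once that is in hand, the rest is a direct adaptation of the arguments already developed in Section~3.
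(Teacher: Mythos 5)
Your proposal is correct and follows essentially the same approach as the paper: you use the same orthonormal triple $\varphi_1,\varphi_2,\varphi_3$ (modeled on the eigenspace of $\mathbb{CP}^2$), derive the same identity for $\sum_{i=1}^3\mathring{R}(\varphi_i,\varphi_i)$ (yours absorbs $R_{1212}$, $R_{3434}$ into $\bar{\lambda}$, which is algebraically equivalent to the paper's form), and reduce both parameter ranges to the inequality $\sum_{i=1}^3\mathring{R}(\varphi_i,\varphi_i)\geq-3\bar{\lambda}$ via Lemma~\ref{lemma average} exactly as in the paper.
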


\begin{proof}
(1). Let $\{e_1, e_2, e_3, e_4\}$ be an orthonormal basis of $V$. Define traceless symmetric two-tensors 
\begin{eqnarray*}
\vp_1 &=& \tfrac{1}{4}\left(e_1\odot e_1 +e_2 \odot e_2-e_3 \odot e_3 -e_4 \odot e_4 \right), \\
\vp_2 &=& \tfrac{1}{2}\left(e_1 \odot e_4 -e_2 \odot e_3 \right), \\
\vp_3 &=& \tfrac{1}{2}\left(e_1 \odot e_3 +e_2 \odot e_4 \right).
\end{eqnarray*}
Then $\{\vp_i\}_{i=1}^3$ form an orthonormal subset of $S^2_0(V)$. A straightforward computation using \eqref{eq C2K calculation} produces
\begin{eqnarray*}
2 \mathring{R}(\vp_1,\vp_1) &=& -R_{1212}-R_{3434} +R_{1313}+R_{2424}+R_{1414}+R_{2323}, \\
2 \mathring{R}(\vp_2,\vp_2) &=& R_{1414}+R_{2323}-2R_{1234}+2R_{1342}, \\
2 \mathring{R}(\vp_3,\vp_3) &=& R_{1313}+R_{2424}-2R_{1234}+2R_{1423}. 
\end{eqnarray*}
Together with the first Bianchi identity, we get
\begin{eqnarray}\label{eq 4.1}
    \sum_{i=1}^3 \mathring{R}(\vp_i,\vp_i) &=& R_{1313}+R_{1414}+R_{2323}+R_{2424} \\ \nonumber
    &&-\tfrac{1}{2}(R_{1212}+R_{3434}) -3R_{1234}.
\end{eqnarray}
If $\mathring{R} \in \mathcal{C}(\a, 1)$ with $1\leq \a \leq 3$, then Lemma \ref{lemma average} implies
\begin{equation}\label{eq 4.2}
-\bar{\l} \leq  \a^{-1} \left( \l_1 + \cdots +\l_{\a} \right)
\leq  \tfrac{1}{3}(\l_1+\l_2+\l_3) 
\leq  \tfrac{1}{3} \sum_{i=1}^3 \mathring{R}(\vp_i,\vp_i).
\end{equation}
Note that in dimension four, we have
\begin{equation}\label{eq 4.3}
    \bar{\l}= \tfrac{S}{12}=\tfrac{1}{6}\left(R_{1313}+R_{1414}+R_{2323}+R_{2424}+R_{1212}+R_{3434} \right).
\end{equation}
Substituting \eqref{eq 4.1} and \eqref{eq 4.3} into \eqref{eq 4.2} produces
\begin{equation}\label{eq 4.21}
R_{1313}+R_{1414}+R_{2323}+R_{2424} -2 R_{1234} \geq 0.
\end{equation}
Since the orthonormal four-frame $\{e_1, e_2, e_3, e_4\}$ is arbitrary, we conclude that $R$ has nonnegative isotropic curvature. 

To handle the case $3\leq \a < 9$, we extend $\{\vp_i\}_{i=1}^3$ to an orthonormal basis $\{\vp_i\}_{i=1}^9$ of $S^2_0(V)$, and reorder $\vp_i$ for $4\leq i\leq 9$ such that
\begin{equation*}
\mathring{R}(\vp_4,\vp_4) \leq \cdots \leq \mathring{R}(\vp_9,\vp_9).
\end{equation*}
By Lemma \ref{lemma average}, this ordering implies for $3 \leq \a <9$,
\begin{equation}\label{eq 4.20}
\sum_{i=4}^{\lfloor{\a}\rfloor} \mathring{R}(\vp_i,\vp_i) + (\a -\lfloor{\a}\rfloor) \mathring{R}(\vp_{\lfloor{\a}\rfloor+1}, \vp_{\lfloor{\a}\rfloor+1}) \leq \frac{\a-3}{6} \sum_{i=4}^9 \mathring{R}(\vp_i,\vp_i).
\end{equation}
If $\mathring{R} \in \mathcal{C}(\a, 9\a^{-1}-2)$ with $3\leq \a < 9$, then
\begin{eqnarray*}
 -(9-2\a) \bar{\l} 
&\leq & \l_1 + \cdots +\l_{\a}  \\ 
&\leq & \sum_{i=1}^{\lfloor{\a}\rfloor} \mathring{R}(\vp_i,\vp_i) + (\a -\lfloor{\a}\rfloor) \mathring{R}(\vp_{\lfloor{\a}\rfloor+1}, \vp_{\lfloor{\a}\rfloor+1}) \\
&\leq & \sum_{i=1}^3 \mathring{R}(\vp_i,\vp_i) + \frac{\a-3}{6} \sum_{i=4}^9 \mathring{R}(\vp_i,\vp_i) \\
&=& \frac{9-\a}{6} \sum_{i=1}^3 \mathring{R}(\vp_i,\vp_i) + \frac{3(\a-3)}{2} \bar{\l},
\end{eqnarray*}
where we have used \eqref{eq 4.20} and 
$$\sum_{i=1}^9 \mathring{R}(\vp_i,\vp_i) =9 \bar{\l}.$$ 
The inequality simplifies as
\begin{equation*}
    -3\bar{\l} \leq  \sum_{i=1}^3 \mathring{R}(\vp_i,\vp_i),
\end{equation*}
which, after substituting into \eqref{eq 4.1} and \eqref{eq 4.3}, yields \eqref{eq 4.21}. 
Since the orthonormal four-frame $\{e_1, e_2, e_3, e_4\}$ is arbitrary, this proves that $R$ has nonnegative isotropic curvature. 

(2). This is similar to (1). If $\mathring{R} \in \mathring{\mathcal{C}}(\a,\theta)$, then some of the inequalities become strict and we obtain strict inequality in \eqref{eq 4.21}, proving that $R$ has positive isotropic curvature 
\end{proof}

\begin{remark}
The above proof uses $\mathbb{CP}^2$ as a model space. For a suitably chosen orthonormal frame $\{e_1,e_2,e_3,e_4\}$, $\spn\{\vp_1,\vp_2,\vp_3\}$ is the eigenspace associated with the eigenvalue $-2$ of $\mathring{R}$ on $\mathbb{CP}^2$ and its orthogonal complement is the eigenspace associated with the eigenvalue $4$. See \cite{BK78} or \cite{Li22Kahler}. 
\end{remark}

Next, we prove Theorem \ref{thm 4D}.
\begin{proof}[Proof of Theorem \ref{thm 4D}]
(1). By Proposition \ref{prop Ricci nonnegative}, $(M,g)$ has positive Ricci curvature. Note that $\Theta_{4,\a} \leq 1$ for $1\leq \a \leq 3$ and $\Theta_{4,\a} \leq 9\a^{-1}-2$ for $3\leq \a < 9$. By Proposition \ref{prop cone monotonicity}, we have $\mathcal{C}(\a,\Theta_{4,\a}) \subset \mathcal{C}(\a,1)$ if $1\leq \a \leq 3$ and $\mathcal{C}(\a,\Theta_{4,\a}) \subset \mathcal{C}(\a,9\a^{-1}-2)$ if $3\leq \a <9$. Proposition \ref{prop PIC} then implies that $(M,g)$ has positive isotropic curvature. The work of Hamilton \cite{Hamilton97} implies that $M$ is diffeomorphic to a spherical space form. % (see the proof of Theorem 1.3 in \cite{Li22JGA} for more details). 

(2). Similar to in part (1), we use Proposition \ref{prop Ricci nonnegative} and Proposition \ref{prop PIC} to get that $(M,g)$ has nonnegative Ricci curvature and nonnegative isotropic curvature. 
Denote by $(\widetilde{M},\tilde{g})$ the universal cover of $(M,g)$. By the Cheeger-Gromoll theorem (see \cite[Theorem 7.3.11]{Petersen2016book}), $(\widetilde{M},\tilde{g})$ splits isometrically as a product $(N^{4-k},g_N) \times \R^{k}$, where $N^{4-k}$ is a closed manifold. Note that $(\widetilde{M},\tilde{g})$ is flat if $k=3$ or $4$.

We show that $(\widetilde{M},\tilde{g})$ is flat if $k=2$. More generally, we prove that if $(\widetilde{M},\tilde{g})$ splits isometrically as the product of $(N_1,g_1)\times (N_2,g_2)$ with $\dim(N_1)=\dim(N_2)=2$, then $(\widetilde{M},\tilde{g})$ must be flat. According to \cite[Proposition 2.1]{Li22product}, the eigenvalues of $\mathring{R}$ on $(\widetilde{M}, \tilde{g})=(N_1,g_1)\times (N_2,g_2)$ are given by 
\begin{equation*}
 \left\{-\tfrac{S_1+S_2}{4}, 0, 0, 0, 0,   \tfrac{S_1}{2}, \tfrac{S_1}{2}, \tfrac{S_2}{2}, \tfrac{S_2}{2} \right\},
\end{equation*}
where $S_i$ denotes the scalar curvature of $N_i$ for $i=1,2$. 
Since $(\widetilde{M},\tilde{g})$ has nonnegative Ricci curvature, we infer that $S_1\geq 0$ and $S_2 \geq 0$. The condition $\mathring{R}\in \mathcal{C}(\a,\bar{\Theta}_{4,\a})$ implies 
\begin{equation*}
    -\a \bar{\Theta}_{4,\a} \bar{\l} \leq \begin{cases}
        -\frac{S_1+S_2}{4}, & 1\leq \a \leq 5;\\
        -\frac{S_1+S_2}{4} +\frac{(\a-5)}{2} \min\{S_1,S_2\}, & 5\leq \a \leq 7; \\
        -\frac{S_1+S_2}{4}+\min\{S_1,S_2\} + \frac{(\a-7)}{2} \max \{S_1,S_2\}, & 7 \leq \a <9. 
    \end{cases}
\end{equation*}
In view of $\bar{\l}=\frac{S_1+S_2}{12}$, one deduces from the above inequality that $S_1=S_2=0$. Therefore, $(\widetilde{M},\tilde{g})$ is flat. 

Next, we examine the case $k=1$. By \cite[Proposition 2.1]{Li22product}, the eigenvalues of $\mathring{R}$ on $(\widetilde{M},\tilde{g}) =(N^3, g_N) \times \R$ are given by
\begin{equation*}
    \left\{-\tfrac{S_N}{12}, 0, 0, 0, \mu_1, \mu_2, \mu_3, \mu_4, \mu_5  \right\}
\end{equation*}
where $S_N$ denotes the scalar curvature of $N$ and $\mu_1 \leq \cdots \leq \mu_5$ denote the eigenvalues of the curvature operator of the second kind of $N$. Note that $(N,g_N)$ is locally irreducible, as it cannot split out another factor of $\R$. Hence, $(N,g_N)$ is locally irreducible, simply connected, and has nonnegative Ricci curvature. By \cite{Hamilton86}, $N$ is diffeomorphic to $\mathbb{S}^3$. Thus, $(\widetilde{M},\tilde{g})$ is either flat or diffeomorphic to $\mathbb{S}^3 \times \R$.

For $k=1$ and $4< \a <9$, we can further conclude that $(N,g_N)$ has constant positive sectional curvature. Noticing $\a \bar{\Theta}_{4,\a}=9-2\a$, $\bar{\l}=\frac{S_N}{12}$, and 
\begin{equation}\label{eq 4.11}
\mu_1 + \cdots + \mu_{\a-4} \leq \frac{\a -4 }{5} \sum_{i=1}^{5} \mu_i =\frac{\a -4 }{5} \left(9\bar{\l} + \frac{S_N}{12}\right), 
\end{equation}
we deduce from $\mathring{R}\in \mathcal{C}(\a,\bar{\Theta}_{4,\a})$ that
\begin{equation*}
  0 \leq  (9-2\a)\tfrac{S_N}{12}  -\tfrac{S_N}{12} + \mu_1 + \cdots + \mu_{\a-4}    \leq 0.  
\end{equation*}
Therefore, \eqref{eq 4.11} attains equality, which happens only when $\mu_1=\mu_2=\cdots =\mu_5$. Hence, $(N,g)$ has pointwise constant sectional curvature. By Schur's lemma, $(N,g)$ is isometric to $\mathbb{S}^3$ with constant positive sectional curvature

At last, we investigate the case $k=0$. Note that $(\widetilde{M}, \tilde{g})=(N^4,g_N)$ is closed, simply connected, irreducible unless it is flat, and has nonnegative isotropic curvature. 
By \cite[Theorem 9.30]{Brendle10book}, $(\widetilde{M}, \tilde{g})$ is either homeomorphic to $\mathbb{S}^4$, or K\"ahler and biholomorphic to $\mathbb{CP}^2$, or isometric to a symmetric space (either $\mathbb{S}^4$ or $\mathbb{CP}^2$). In the first case, the homeomorphism can be upgraded to diffeomorphism using Hamilton's work \cite{Hamilton97} while in the K\"ahler case, it must be flat if $1\leq \a \leq 4$ and isometric to $\mathbb{CP}^2$ if $4\leq \a <9$ by Theorem \ref{thm Kahler flat}.

In summary, we have proved that $\widetilde{M}$ is either flat, or diffeomorphic to $\mathbb{S}^4$, or diffeomorphic to $\mathbb{S}^3\times \R$ (isometric if $4<\a <9$), or isometric to  $\mathbb{CP}^2$ if $4\leq \a <9$. 

\end{proof}

By Proposition \ref{prop PIC} and the classification of closed four-manifolds with positive isotropic curvature (see \cite{Hamilton97}, \cite{CZ06}, and \cite{CTZ12}), we get 
\begin{theorem}\label{thm 4D PIC}
Let $(M^4,g)$ be a closed Riemannian manifold of dimension four satisfying $\mathring{R}\in \mathring{\mathcal{C}}(\a, 1)$ with $1\leq \a \leq 3$ or $\mathring{R} \in \mathring{\mathcal{C}}(\a, 9\a^{-1}-2)$ with $3\leq \a <9$. 
Then $M$ has positive isotropic curvature and $M$ is diffeomorphic to a $\mathbb{S}^4$, $\mathbb{RP}^4$, $(\mathbb{S}^3 \times \R)/G$, or a connected sum of them, where $G$ is a discrete group of isometries of the standard $\mathbb{S}^3 \times  \R$ acting co-compactly without fixed points. 
\end{theorem}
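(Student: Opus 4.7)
The plan is to derive Theorem \ref{thm 4D PIC} directly from Proposition \ref{prop PIC} combined with the diffeomorphism classification of closed four-manifolds admitting a metric of positive isotropic curvature. No new curvature computation is needed: the algebraic implication $\mathring{R}\in\mathring{\mathcal C}(\cdot,\cdot)\Rightarrow\mathrm{PIC}$ has already been carried out in Proposition \ref{prop PIC}, so the argument splits cleanly into a pointwise algebraic step followed by a global topological step.

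\textbf{Step 1: pointwise positive isotropic curvature.} I would apply Proposition \ref{prop PIC}(2) at every $p\in M$. The two alternative hypotheses stated in Theorem \ref{thm 4D PIC}, namely $\mathring{R}_p\in\mathring{\mathcal C}(\a,1)$ with $1\leq\a\leq 3$, or $\mathring{R}_p\in\mathring{\mathcal C}(\a,9\a^{-1}-2)$ with $3\leq\a<9$, are exactly the hypotheses of Proposition \ref{prop PIC}(2). Hence the algebraic curvature tensor $R_p\in S^2_B(\wedge^2 T_pM)$ has positive isotropic curvature at every $p\in M$. It should be noted that at $\a=3$ the two regimes coincide, since $9\cdot 3^{-1}-2=1$, so the statement is consistent at the overlap. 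Consequently $(M^4,g)$ is a closed Riemannian four-manifold with (globally) positive isotropic curvature.

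\textbf{Step 2: global topological classification.} With positive isotropic curvature in hand, I would invoke the Hamilton--Chen--Zhu--Chen--Tang--Zhu classification of closed four-manifolds of positive isotropic curvature. Hamilton \cite{Hamilton97} proved, using Ricci flow with surgery, the simply-connected case; Chen--Zhu \cite{CZ06} extended the classification to closed four-manifolds of positive isotropic curvature with no essential incompressible space form; and Chen--Tang--Zhu \cite{CTZ12} removed the residual topological assumption. Taken together, these results assert that any closed four-manifold carrying a metric of positive isotropic curvature is diffeomorphic to $\mathbb{S}^4$, $\mathbb{RP}^4$, a quotient $(\mathbb{S}^3\times\R)/G$ with $G$ a co-compact fixed-point-free discrete subgroup of the isometry group of the standard $\mathbb{S}^3\times\R$, or a connected sum of such pieces. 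Applying this classification to $(M^4,g)$ yields the desired list.

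Since both ingredients are already in place, I do not expect any genuine obstacle. The argument is, by design, a clean corollary: the only substantive input from this paper is Proposition \ref{prop PIC}, whose proof (using $\mathbb{CP}^2$ as the model space) supplies the optimal PIC criterion exploited here; the rest is the external classification theorem.
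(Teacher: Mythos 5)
Your proposal is correct and follows exactly the route the paper takes: the paper states Theorem \ref{thm 4D PIC} as an immediate consequence of Proposition \ref{prop PIC} together with the classification of closed four-manifolds of positive isotropic curvature due to Hamilton, Chen--Zhu, and Chen--Tang--Zhu. Nothing to add.
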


\section{Homological Sphere Theorems}

In this section, we use the Bochner technique to prove Theorem \ref{thm homology sphere}, Theorem \ref{thm Betti}, and Theorem \ref{thm homology sphere Einstein}. 

Recall that a harmonic $p$-form $\omega$ on $(M^n,g)$ satisfies the Bochner formula
\begin{equation}\label{eq Bochner}
    \frac{1}{2}\Delta |\omega|^2 =|\nabla \omega|^2 +g(\Ric_L(\omega), \omega). 
\end{equation}
It was discovered by Nienhaus, Petersen, and Wink \cite{NPW22} that the curvature term $g(\Ric_L(\omega), \omega)$ can be written as 
\begin{equation}\label{eq Bochner curvature term}
\frac{3}{2}g(\Ric_L(\omega), \omega) = \sum_{\a=1}^{N} \l_{\a} |S_\a \omega|^2 +\frac{p(n-2p)}{n}\Ric(\omega, \omega) +\frac{p^2}{n^2}S|\omega|^2, 
\end{equation}
where $\l_1 \leq \cdots \leq \l_N$ are the eigenvalues of $\mathring{R}$, $\{S_{\a}\}_{\a=1}^N$ are the associated eigentensors, and 
\begin{equation*}
\Ric(\omega, \omega) =\sum_{j,k}\sum_{i_2,\ldots, i_n} R_{jk}\omega_{ji_2\ldots i_n}\omega_{ki_2\ldots i_n}.
\end{equation*}
The action of $S_\a$ on $\omega$ is given by (see \cite[Definition 1.3]{NPW22} \footnote{\cite[Definition 1.3]{NPW22} missed the minus sign, as pointed out to us by the authors.}
\begin{equation*}
    (S_\a \omega)(X_1, \cdots, X_p)=-\sum_{k=1}^p \omega(X_1, \cdots, S_\a X_k, \cdots, X_p). 
\end{equation*}
The key of the Bochner technique is to show the nonnegativity of $g(\Ric_L(\omega), \omega)$ under appropriate curvature conditions and then apply the maximum principle to \eqref{eq Bochner}. It is easy to see that $\mathring{R}\geq 0$ implies the nonnegativity of each term on the right-hand side of \eqref{eq Bochner curvature term}. Below we will use the weight principle \cite[Theorem 3.6]{NPW22} and lower estimates on $\Ric(\omega, \omega)$ to show that $g(\Ric_L(\omega), \omega)$ is nonnegative under much weaker curvature conditions. 

We begin with the Einstein case and prove Theorem \ref{thm homology sphere Einstein}. 
\begin{proof}[Proof of Theorem \ref{thm homology sphere Einstein}]

Passing to the orientation double cover if necessary, we may assume that $(M,g)$ is oriented. By Poincar\'e duality, we may assume $1\leq p \leq \frac{n}{2}$. 
Using the Einstein condition $\Ric=\frac{S}{n}g$, the identity $S=n(n-1)\bar{\l}$,  and the identity (\cite[Lemma 3.7, part (a)]{NPW22})
\begin{equation}\label{eq omega norm}
|\omega|^2=\frac{2n}{p(n-p)(n+2)}\sum_{\a=1}^{N} |S_\a \omega|^2,
\end{equation}
we obtain from \eqref{eq Bochner curvature term} that 
\begin{equation}\label{eq 5.1}
    \frac{3}{2}g(\Ric_L(\omega), \omega)= \sum_{\a=1}^{N} \left(\l_{\a}  +\frac{2(n-1)}{n+2}\bar{\l} \right) |S_\a \omega|^2.
\end{equation}
By \cite[Lemma 3.7, part (b)]{NPW22} and the weight principle \cite[Theorem 3.6]{NPW22}, we conclude that if the operator $\mathring{R} + \beta \bar{\l} \id$ is $\frac{n+2}{2}$-nonnegative, then 
$$\sum_{\a=1}^{N} \left(\l_{\a}  +\beta \bar{\l} \right) |S_\a \omega|^2 \geq 0.$$
Below we prove part (3) first and then part (1) and part (2).

(3).  Note that $\mathring{R} \in \mathcal{C}\left(\frac{n+2}{2}, \frac{2(n-1)}{n+2} \right)$ if and only if the operator $\mathring{R} + \frac{2(n-1)}{n+2}\bar{\l} \id$ is $\frac{n+2}{2}$-nonnegative. 
Therefore, 
$$\frac{3}{2}g(\Ric_L(\omega), \omega)=\sum_{\a=1}^{N} \left(\l_{\a}  +\frac{2(n-1)}{n+2}\bar{\l} \right) |S_\a \omega|^2 \geq 0.$$
Applying the maximum principle to \eqref{eq Bochner} yields that $\omega$ must be parallel. 

(1). By (3), $\omega$ must be parallel. It follows from \eqref{eq Bochner} and \eqref{eq 5.1} that
\begin{equation*}
    \sum_{\a=1}^{N} \left(\l_{\a}  +\frac{2(n-1)}{n+2}\bar{\l} \right) |S_\a \omega|^2=0.
\end{equation*}
If $\mathring{R} \in \mathring{\mathcal{C}}\left(\frac{n+2}{2}, \frac{2(n-1)}{n+2} \right)$ and $\omega$ does not vanish, then the left-hand side becomes strictly positive, yielding a contradiction. Thus, $\omega \equiv 0$. Since $b_p(M,\R)=0$ for all $1\leq p\leq \frac{n}{2}$, we conclude that $M$ is a rational homology sphere. 

(2). By (3), $\omega$ must be parallel and 
\begin{eqnarray*}
    0  &=& \sum_{\a=1}^{N} \left(\l_{\a}  +\frac{2(n-1)}{n+2}\bar{\l} \right) |S_\a \omega|^2 \\
    &=& \sum_{\a=1}^{N} \left(\l_{\a}  +\theta \bar{\l} \right) |S_\a \omega|^2 + \left(\frac{2(n-1)}{n+2}-\theta \right) \bar{\l} \sum_{\a=1}^{N} |S_\a \omega|^2. % \\
   % &\geq & \left(\frac{2(n-1)}{n+2}-\theta \right) \frac{p(n-p)(n+2)}{2n} \bar{\l} |\omega|^2.
\end{eqnarray*}
The first term is nonnegative by $\mathring{R}\in \mathcal{C}\left(\frac{n+2}{2},\theta \right)$ and the second term is nonnegative by $\bar{\l}\geq 0$ and $\theta < \frac{2(n-1)}{n+2}$. It follows that both terms are equal to zero on $M$ and at every point in $M$ we have either $\bar{\l}=0$ or $\omega=0$. If $\bar{\l}=0$ at a point, then $M$ is scalar flat everywhere and hence flat by Proposition \ref{prop basic cones}. Otherwise, we have $\bar{\l} >0$ everywhere and $\omega \equiv 0$. Hence, $(M,g)$ is either flat or a rational homology sphere. 

This completes the proof of Theorem \ref{thm homology sphere Einstein}.
\end{proof}

Next, we turn to the proof of Theorem \ref{thm Betti}. Without the Einstein condition, we need to estimate the term $\Ric(\omega,\omega)$ from below. As observed in \cite{NPW22}, with respect to an orthonormal basis $\{e_i\}_{i=1}^n$ that diagonalizes the Ricci tensor, we have
\begin{equation*}
\Ric(\omega, \omega)= \sum_{j,k} \sum_{i_2, \cdots, i_p} R_{jk} \omega_{ji_2\cdots i_p} \omega_{ji_2\cdots i_p} = \frac{1}{p}\sum_{I=(i_1, \cdots, i_p)} \left(\sum_{i\in I } R_{ii} \right) \omega_I^2.
\end{equation*}
For this purpose, we establish a lower bound for $\sum_{i=1}^p R_{ii}$. 
\begin{proposition}\label{prop Ricci p}
Let $R \in S^2_B(\wedge^2 V)$ and denote by $\mathring{R}$ its induced curvature operator of the second kind. If $1\leq p \leq \frac{n}{2}$ and $\mathring{R} \in \mathcal{C}\left(\frac{(n-1)p}{2}, \theta \right)$, then 
\begin{equation*}
    \sum_{i=1}^p R_{ii} \geq \frac{(n-1)p}{n-p+2} (1-(n-p+1)\theta) \bar{\l}
\end{equation*}
for any orthonormal basis $\{e_i\}_{i=1}^n$ of $V$. 
\end{proposition}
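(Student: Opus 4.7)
The plan is to adapt the proof technique of Proposition \ref{prop Ricci} to the $p$-dimensional setting. Specifically, I would construct an orthonormal family of $\tfrac{(n-1)p}{2}$ traceless symmetric two-tensors $\{\vp_a\}$ adapted to the $p$-plane $V_p=\spn\{e_1,\ldots,e_p\}$ and its orthogonal complement $V_p^\perp=\spn\{e_{p+1},\ldots,e_n\}$, express the sum of their $\mathring R$-values as an explicit linear combination of $\sum_{i=1}^p R_{ii}$ and $\bar\l$, and combine this with the variational characterization of $\l_1+\cdots+\l_{(n-1)p/2}$ and the cone condition $\mathring R\in\mathcal C(\tfrac{(n-1)p}{2},\theta)$.

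The test tensors I would use come in two families. The diagonal family is built from $e_i\odot e_i$ for $i\in\{1,\ldots,p\}$ together with $\sum_{k>p}e_k\odot e_k$, analogous to $\vp_1$ in the proof of Proposition \ref{prop Ricci}; concretely each such tensor is proportional to $(n-p)\,e_i\odot e_i-\sum_{k>p}e_k\odot e_k$, followed by a Gram-Schmidt orthonormalization. The off-diagonal family consists of $\tfrac{1}{\sqrt 2}\,e_i\odot e_k$ for suitable pairs $(i,k)$ with $i\in\{1,\ldots,p\}$. By Proposition \ref{prop C2K calculation}, the diagonal tensors yield $\mathring R$-values mixing $R_{ii}$, intra-$V_p^\perp$ sectional curvatures, and $\bar\l$, while each off-diagonal tensor contributes a single sectional curvature $R_{ikik}$.

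Using the identities $\sum_{i=1}^p R_{ii}=2\sum_{1\le i<j\le p}R_{ijij}+\sum_{i=1}^p\sum_{k=p+1}^n R_{ikik}$ and $S=n(n-1)\bar\l$ to reorganize the sums, I would select a subfamily of cardinality exactly $\tfrac{(n-1)p}{2}$ for which
\begin{equation*}
\sum_{a=1}^{(n-1)p/2}\mathring R(\vp_a,\vp_a)\;=\;\frac{n-p+2}{2(n-p+1)}\sum_{i=1}^p R_{ii}\;-\;\frac{(n-1)p}{2(n-p+1)}\,\bar\l.
\end{equation*}
The specific coefficients on the right are forced by requiring the final step to produce the stated bound. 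Applying the variational principle and the cone condition then gives
\begin{equation*}
-\tfrac{(n-1)p}{2}\theta\,\bar\l\;\le\;\l_1+\cdots+\l_{(n-1)p/2}\;\le\;\sum_{a=1}^{(n-1)p/2}\mathring R(\vp_a,\vp_a),
\end{equation*}
and solving for $\sum_{i=1}^p R_{ii}$ yields the desired estimate.

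The main obstacle is identifying the precise orthonormal subfamily of size $\tfrac{(n-1)p}{2}$ satisfying the above identity. The count itself, together with the factor $n-p+1$ appearing in the bound, suggests that the correct tensors are drawn from a basis of an invariant subspace of $S^2_0(V)$ under the stabilizer of $V_p$; once this subspace is correctly identified, the verification of the $\mathring R$-sum identity reduces to a careful but routine application of Proposition \ref{prop C2K calculation}.
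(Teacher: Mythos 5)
The paper's proof uses a genuinely different and more intricate strategy than the one you sketch, and the step you identify as ``the main obstacle'' is in fact a real obstruction, not a gap to be filled in routinely.

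The paper introduces the weighted quantity
\[
Q := 2(n-p+1)\sum_{1\leq i<j\leq p} a_{ij} + (n-p)\sum_{i=1}^p\sum_{j=p+1}^n a_{ij} + (n-p)\sum_{k=1}^p b_k,
\]
where $a_{ij}=\mathring R(\vp_{ij},\vp_{ij})$ and $b_k=\mathring R(\psi_k,\psi_k)$, and uses the algebraic identity $Q=(n-p+2)\sum_{i=1}^p R_{ii}-(n-1)p\bar\l$ from \cite[p.~23]{NPW22}. The eigenvalue lower bound is applied not once but $2(n-p+1)$ times: the proof shows that $Q$ can be partitioned into $2(n-p+1)$ parenthetical groups, each of which is a sum over an orthonormal family of exactly $\frac{(n-1)p}{2}$ traceless tensors, hence each group is $\geq \l_1+\cdots+\l_{\frac{(n-1)p}{2}}$. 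This partition is the combinatorial heart of the argument, and it requires three separate constructions depending on the parities of $p$ and $n$; when $p$ is odd and $n$ is even, $\frac{(n-1)p}{2}$ is not an integer and half-weight terms like $\tfrac12 b_p$ and $\tfrac12 a_{pn}$ must be inserted.

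Your plan is to produce a \emph{single} orthonormal family $\{\vp_a\}_{a=1}^{(n-1)p/2}$ with $\sum_a\mathring R(\vp_a,\vp_a)=\tfrac{n-p+2}{2(n-p+1)}\sum R_{ii}-\tfrac{(n-1)p}{2(n-p+1)}\bar\l$ and apply the eigenvalue estimate once. That quantity is $Q/(2(n-p+1))$, which gives the ``diagonal'' terms $a_{ij}$ ($i<j\leq p$) weight $1$ but all ``mixed'' terms $a_{ij}$ ($i\leq p<j$) and $b_k$ the strictly fractional weight $\tfrac{n-p}{2(n-p+1)}$. A sum $\sum_a\mathring R(\vp_a,\vp_a)$ over an orthonormal family equals $\tr(P_W\mathring R)$ for the rank-$\frac{(n-1)p}{2}$ projection $P_W$ onto $W=\spn\{\vp_a\}$. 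If your family is constructed independently of $R$, this identity must hold for every algebraic curvature tensor; since the off-diagonal entries $\mathring R(\vp_{ij},\psi_k)$, $\mathring R(\vp_{ij},\vp_{kl})$ etc.\ are nonzero linear functionals of $R$, the off-diagonal entries of $P_W$ in the $\{\vp_{ij},\psi_k\}$ basis must vanish, forcing $P_W$ to be a $0/1$ diagonal matrix — but then all weights are $0$ or $1$, contradicting the required fractional weights. So no such family exists, and the single-application-of-the-eigenvalue-bound strategy cannot recover the paper's inequality $Q\geq 2(n-p+1)(\l_1+\cdots+\l_{(n-1)p/2})$. You would need to replace your single family with the paper's partition into $2(n-p+1)$ families, and also address the non-integer case, before the proof would close.
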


\begin{proof}
Let $\{e_i\}_{i=1}^n$ be an orthonormal basis of $V$. Define traceless symmetric two-tensors 
\begin{equation*}
\vp_{ij} =\tfrac{1}{\sqrt{2}} e_i \odot e_j, \text{ for } 1\leq i < j \leq n,
\end{equation*}
and 
\begin{equation*}
\psi_k = \tfrac{1}{2\sqrt{(n-k)(n-k+1)}} \left((n-k)e_k \odot e_k -\sum_{l=k+1}^n e_l \odot e_l \right),
\end{equation*}
for $1 \leq k \leq n-1$. Then $\{\vp_{ij}\}_{1\leq i < j \leq n} \cup \{\psi_k\}_{1\leq k \leq n-1}$ form an orthonormal basis of $S^2_0(V)$. 
For simplicity of notation, we set 
\begin{eqnarray*}
    a_{ij} &:=& \mathring{R}(\vp_{ij}, \vp_{ij}), \\
    b_k &:=& \mathring{R}(\psi_k,\psi_k),
\end{eqnarray*}
and 
\begin{eqnarray*}
Q:= 2(n-p+1) \sum_{1\leq i < j \leq p} a_{ij} +(n-p) \sum_{i=1}^p \sum_{j=p+1}^n a_{ij}  + (n-p) \sum_{k=1}^p b_k.  
\end{eqnarray*}
It was observed in \cite[page 23]{NPW22} that 
\begin{equation}\label{eq Q and p-Ricci}
    Q= (n-p+2)\sum_{i=1}^p R_{ii} -(n-1)p\bar{\l}. 
\end{equation}
Therefore, we need to bound $Q$ from below. 

Noticing
\begin{eqnarray*}
    (n-p-1) \sum_{i=1}^p \sum_{j=p+1}^n a_{ij} = \sum_{l=p+1}^n \sum_{i=1}^p \sum_{\substack{p+1 \leq j \leq n \\ j\neq l}} a_{ij},
\end{eqnarray*}
we arrange, when $p$ is even, that
\begin{eqnarray*}
Q &=& \sum_{l=p+1}^n \left(\sum_{1\leq i < j \leq p} a_{ij} + \sum_{k=1}^{\frac{p}{2}} b_k + \sum_{i=1}^{\frac{p}{2}} \sum_{\substack{p+1 \leq j \leq n \\ j \neq l}} a_{ij} \right) \\
&& + \sum_{l=p+1}^n \left(\sum_{1\leq i < j \leq p} a_{ij} + \sum_{k=\frac{p}{2}+1}^{p} b_k + \sum_{i=\frac{p}{2}+1}^{p} \sum_{\substack{p+1 \leq j \leq n \\ j \neq l}} a_{ij} \right) \\
&& +\left(\sum_{1\leq i < j \leq p} a_{ij} +\sum_{i=1}^{\frac{p}{2}} \sum_{p+1 \leq j \leq n} a_{ij} \right)  \\
&& + \left( \sum_{1\leq i < j \leq p} a_{ij} +\sum_{i=\frac{p}{2}+1}^{p} \sum_{p+1 \leq j \leq n} a_{ij} \right).
\end{eqnarray*}
Note that in the above arrangement, each bracket contains the sum of $\frac{(n-1)p}{2}$ many terms and the involved symmetric two-tensors are orthonormal. 
Therefore, each bracket is bounded from below by $\l_1+\cdots +\l_{\frac{(n-1)p}{2}}$ and we get
\begin{equation}\label{eq Q lower bound}
    Q \geq 2(n-p+1)\left(\l_1+\cdots +\l_{\frac{(n-1)p}{2}} \right). 
\end{equation}

Next, we show that such an arrangement can also be made when $p$ is odd. Suppose both $p$ and $n$ are odd. Let $E_1$ and $E_2$ be a partition of the set $\{1,2, \cdots, p\}$ with $|E_1|=\frac{p+1}{2}$ and $|E_2|=\frac{p-1}{2}$. For each $p+1\leq l \leq n$, let $F_{1l}$ and $F_{2l}$ be a partition of the set 
$$\left\{(i,j) : 1\leq i \leq p, p+1 \leq j \leq n, j \neq l\right\}$$ 
with $|F_{1l}|=\frac{p(n-p-1)-1}{2}$ and $|F_2|=\frac{p(n-p-1)+1}{2}$. 
Then, we have
\begin{eqnarray*}
Q &=& \sum_{l=p+1}^n \left(\sum_{1\leq i < j \leq p} a_{ij} + \sum_{k \in E_1} b_k + \sum_{(i,j) \in F_{1l}}a_{ij} \right) \\
&& + \sum_{l=p+1}^n \left(\sum_{1\leq i < j \leq p} a_{ij} + \sum_{k \in E_2} b_k + \sum_{(i,j) \in F_{2l}} a_{ij} \right) \\
&& +\left(\sum_{1\leq i < j \leq p} a_{ij} +\sum_{i=1}^{p} \sum_{j=p+1}^{\frac{n+p}{2}+1} a_{ij} \right)  \\
&& + \left( \sum_{1\leq i < j \leq p} a_{ij} +\sum_{i=1}^{p} \sum_{j=\frac{n+p}{2}+2}^{n} a_{ij} \right).
\end{eqnarray*}
It follows that \eqref{eq Q lower bound} holds in this case. 

Suppose $p$ is odd and $n$ is even. Note that $\frac{(n-1)p}{2}=[\frac{(n-1)p}{2}] + \frac{1}{2}$. We can arrange the terms in $Q$ as
\begin{eqnarray*}
 Q &=& \sum_{l=p+1}^n \left(\sum_{1\leq i < j \leq p} a_{ij} + \sum_{k=1}^{\frac{p-1}{2}} b_k + \sum_{(i,j) \in G_{1l}}a_{ij} + \frac{1}{2} b_p \right) \\
&& + \sum_{l=p+1}^n \left(\sum_{1\leq i < j \leq p} a_{ij} + \sum_{k=\frac{p+1}{2}}^{p-1} b_k + \sum_{(i,j) \in G_{2l}}a_{ij} + \frac{1}{2} b_p \right) \\
&& +\left(\sum_{1\leq i < j \leq p} a_{ij}  +\sum_{i=1}^{p} \sum_{j=p+1}^{\frac{n+p-1}{2}} a_{ij}  + \frac{1}{2} a_{pn} \right)  \\
&& + \left( \sum_{1\leq i < j \leq p} a_{ij} + \sum_{i=1}^{p} \sum_{j=\frac{n+p+1}{2}}^{n-1} a_{ij} + \frac{1}{2} a_{pn} \right),   
\end{eqnarray*}
where $G_{1l}$ and $G_{2l}$ is a partition of the set 
$$\{(i,j) | 1\leq i\leq p, p+1 \leq j \leq n, j \neq l\}$$ 
with $|G_{1l}|=|G_{2l}|=\frac{p(n-p-1)}{2}$. This proves \eqref{eq Q lower bound} in this case. 

Using $\mathring{R} \in \mathcal{C}\left(\frac{(n-1)p}{2}, \theta \right)$ and \eqref{eq Q lower bound}, we obtain
\begin{equation*}
Q \geq -(n-1)p(n-p+1)\theta \bar{\l}. 
\end{equation*}
By \eqref{eq Q and p-Ricci}, we then infer that
\begin{equation*}
    \sum_{i=1}^p R_{ii} \geq \frac{(n-1)p}{n-p+2} (1-(n-p+1)\theta) \bar{\l}.
\end{equation*}
This proves Proposition \ref{prop Ricci p}. 
\end{proof}

We are ready to prove Theorem \ref{thm Betti}. 
\begin{proof}[Proof of Theorem \ref{thm Betti}]
Since $\frac{(n-1)p}{2} >  \frac{n+2}{2}$ for $p\geq 2$, we have $$\mathcal{C}\left(\frac{n+2}{2}, A_{n,p} \right) \subset \mathcal{C} \left(\frac{(n-1)p}{2}, A_{n,p} \right) .$$
By Proposition \eqref{prop Ricci p}, $\mathring{R} \in \mathcal{C}\left(\frac{n+2}{2}, A_{n,p} \right)$ implies
\begin{equation*}
    \sum_{k=1}^p R_{i_ki_k} \geq \frac{(n-1)p}{n-p+2} (1-(n-p+1)A_{n,p}) \bar{\l}.
\end{equation*}
Therefore, we obtain
\begin{eqnarray*}
&&  \frac{p^2}{n^2}S |\omega|^2 +\frac{p(n-2p)}{n}\Ric(\omega, \omega) \\
&=& \frac{p^2(n-1)}{n}\bar{\l} |\omega|^2 +\frac{(n-2p)}{n} \sum_{I=(i_1,\cdots, i_p)} \left(\sum_{i\in I } R_{ii} \right) \omega_I^2 \\
&\geq& \frac{p^2(n-1)}{n}\bar{\l} |\omega|^2 + \frac{(n-2p)(n-1)p}{n(n-p+2)} (1-(n-p+1)A_{n,p}) \bar{\l} |\omega|^2 \\
&= & \frac{2(n-1) \bar{\l} }{(n-p)(n+2)}\left( p+ \frac{(n-2p)}{(n-p+2)} (1-(n-p+1)A_{n,p}) \right) \sum_{\a=1}^N |S_\a w|^2 \\
&=& A_{n,p} \bar{\l} \sum_{\a=1}^N |S_\a w|^2,
\end{eqnarray*}
where we have used \eqref{eq omega norm} and \eqref{eq A_{n,p} def} in getting the last two steps, respectively. 
By \eqref{eq Bochner curvature term}, we have
\begin{equation*}
    \frac{3}{2}g(\Ric_L(\omega), \omega)) \geq \sum_{\a=1}^N  \left(\l_\a+ A_{n,p} \bar{\l}\right) |S_\a w|^2
\end{equation*}

(3). If $\mathring{R} \in \mathcal{C}\left(\frac{n+2}{2}, A_{n,p}\right)$, then the operator $\mathring{R}+A_{n,p}\bar{\l}\id$ is $\frac{n+2}{2}$-nonnegative. The weight principle in \cite[Theorem 3.6]{NPW22} then implies that 
\begin{equation*}
    \frac{3}{2}g(\Ric_L(\omega), \omega)) \geq \sum_{\a=1}^N  \left(\l_\a+ A_{n,p} \bar{\l}\right) |S_\a w|^2 \geq 0.
\end{equation*}
The result follows from the maximum principle. 

(1). This can be proved similarly as in the proof of part (1) of Theorem \ref{thm homology sphere Einstein}. 

(2). By (3), $\omega$ is parallel and 
\begin{eqnarray*}
    0 &=& \frac{3}{2}g(\Ric_L(\omega), \omega)) \\
    &\geq & \sum_{\a=1}^N  \left(\l_\a+ A_{n,p} \bar{\l}\right) |S_\a w|^2  \\
    &=& \sum_{\a=1}^N  \left(\l_\a+ \theta \bar{\l}\right) |S_\a w|^2 + (A_{n,p}-\theta) \bar{\l} \sum_{\a=1}^N |S_\a w|^2 \\
    &\geq& 0,
\end{eqnarray*}
where we have used $\mathring{R} \in \mathcal{C}\left(\frac{n+2}{2}, \theta \right)$ and $\theta < A_{n,p}$. Note that $\mathring{R} \in \mathcal{C}\left(\frac{n+2}{2}, \theta \right)$ implies that either  $\mathring{R} \in \mathring{\mathcal{C}}\left(\frac{n+2}{2}, A_{n,p} \right)$ or $\bar{\l}=0$. 
If there exist $p\in M$ such that $\bar{\l}(p) >0$, then $\mathring{R}_p \in \mathring{\mathcal{C}}\left(\frac{n+2}{2}, A_{n,p} \right)$. Then $\sum_{\a=1}^n  \left(\l_\a+ \theta \bar{\l}\right) |S_\a w|^2 =0$ at $p$ implies $\omega(p) = 0$. Since $|\omega|$ is a constant, we conclude that $\omega \equiv 0$ on $M$. 
Otherwise, $(M,g)$ is scalar flat and hence flat by Proposition \ref{prop basic cones}.

Therefore, either $(M,g)$ is flat or $b_p(M,\R)=0$.
\end{proof}

Finally, we prove the homological sphere theorem. 

\begin{proof}[Proof of Theorem \ref{thm homology sphere}]
Note that for $2\leq p \leq \frac{n}{2}$, $A_{n,p}$ increases as $p$ increases. Therefore, 
$$A_{n,p} \geq A_{n,2} = \frac{2(n-1)(3n-4)}{3n^3-12n^2+14n-8} > \frac{2}{n+2},$$
where the last inequality holds for any $n \geq 3$. 
If $\mathring{R}\in \mathcal{C}\left(\frac{n+2}{2}, \frac{2}{n+2} \right)$, then $\mathring{R}\in \mathcal{C}\left(\frac{n+2}{2}, \theta \right)$ with $\theta < A_{n,p}$. By Theorem \ref{thm Betti}, we have that either $(M,g)$ is flat or $b_p(M,\R)=0$ for all $2\leq p \leq \frac{n}{2}$. 
    
By Proposition \ref{prop Ricci}, $\mathring{R}\in \mathcal{C}\left(\frac{n+2}{2}, \theta \right)$ for some $\theta < \frac{n+2}{2}$ implies $\Ric \geq \delta \bar{\l} g \geq 0$, where $\delta=\frac{n-1}{n+4}(2-(n+2)\theta)>0$. If there exists $p\in M$ such that $\bar{\l}(p)>0$, then the Ricci curvature is positive at $p$ and we conclude $b_1(M,\R)=0$ (see \cite[Theorem 3.5]{Libook}). Thus, all the Betti numbers vanish and $(M,g)$ is a rational homology sphere.
Otherwise, $\bar{\l}\equiv 0$ on $M$ and $(M,g)$ is flat by Proposition \ref{prop basic cones}. 
Hence, $(M,g)$ is either flat or a rational homology sphere. 
\end{proof}

\section{K\"ahler Manifolds}

In this section, we prove Theorem \ref{thm Kahler flat}. We recall some observations and identities needed in the proof and refer the reader to \cite{Li22Kahler} for a more detailed account. 

Let $(V,g,J)$ be a complex Euclidean vector space of complex dimension $m\geq 2$, where $J$ is the complex structure. 
As observed in \cite{Li22Kahler}, $S^2_0(V)$ admits the decomposition
\begin{equation*}
    S^2_0(V) = E^+ \oplus E^{-},
\end{equation*}
where 
$$E^+=\spn\{u \odot v -Ju \odot Jv : u, v \in V \}$$ and $E^-=(E^+)^\perp$. It was shown in \cite{BK78} that, on $(\mathbb{CP}^m, g_{FS})$, $E^+$ is the eigenspace associated with the eigenvalue $4$ of the curvature operator of the second kind while $E^-$ is the eigenspace associated with the eigenvalue $-2$. Note that $\dim(E^-)=m^2-1$ and $\dim(E^+)=m(m+1)$. 

As constructed in \cite{Li22Kahler}, if $\{e_1, \cdots, e_m, Je_1, \cdots, Je_m\}$ is an orthonormal basis of $V$, then $E^-$ has an orthonormal basis given by
$$\{\vp^-_{ij}\}_{1\leq i < j \leq m} \cup \{\psi^-_{ij}\}_{1\leq i < j \leq m} \cup \{\eta_k\}_{k=1}^{m-1},$$
where
\begin{eqnarray*}
\vp^{-}_{ij}  &=& \tfrac{1}{2} \left( e_i \odot e_j + Je_i \odot Je_j \right), \text{ for } 1 \leq i < j \leq m, \\
\psi^{-}_{ij}  &=& \tfrac{1}{2} \left( e_i \odot J e_j - Je_i \odot e_j \right), \text{ for } 1 \leq i < j \leq m,\\
\eta_k &=& \tfrac{k}{\sqrt{8k(k+1)}}  (e_{k+1}\odot e_{k+1} +Je_{k+1} \odot Je_{k+1}) \\
    &&  - \tfrac{1}{\sqrt{8k(k+1)}}\sum_{i=1}^{k}(e_i \odot e_i +Je_i \odot Je_i), \\
    && \text{ for } 1\leq k \leq m-1, 
\end{eqnarray*} 
and $E^+$ has an orthonormal basis given by 
$$\{\vp^{+}_{ij}\}_{1\leq i < j \leq m} \cup\{\psi^{+}_{ij}\}_{1\leq i < j \leq m} \cup \{\theta_i \}_{i=1}^{2m},$$ where
\begin{eqnarray*}
\vp^+_{ij}  &=& \tfrac{1}{2} \left( e_i \odot e_j - Je_i \odot Je_j \right), \text{ for } 1 \leq i < j \leq m, \\
\psi^{+}_{ij}  &=& \tfrac{1}{2} \left( e_i \odot J e_j + Je_i \odot e_j \right), \text{ for } 1 \leq i < j \leq m, \\
    \theta_{i} &=& \tfrac{1}{2\sqrt{2}} \left( e_i \odot e_i -Je_i \odot Je_i \right), \text{ for } 1\leq i \leq m, \\
     \theta_{m+i} &=& \tfrac{1}{\sqrt{2}} e_i \odot J e_i, \text{ for } 1\leq i \leq m.
\end{eqnarray*}
It was calculated in \cite{Li22Kahler} that
\begin{eqnarray}\label{eq 8.10}
\mathring{R}(\vp^-_{ij}, \vp^-_{ij}) +\mathring{R}(\psi^-_{ij}, \psi^-_{ij})  =  -2 R(e_i, Je_i, e_j, Je_j),
\end{eqnarray}
for  $1\leq i < j \leq m$, and
\begin{equation}\label{eq 8.13}
\mathring{R}(\theta_{i}, \theta_{i}) = \mathring{R}(\theta_{m+i}, \theta_{m+i}) =R(e_i,Je_i, e_i, Je_i), 
\end{equation}
for $1\leq i \leq m$.
Moreover, we have (see \cite[Lemma 4.3 and Lemma 4.4]{Li22Kahler}) that
\begin{equation}\label{eq 8.5}
2m(2m-1)\bar{\l}=\sum_{1\leq i< j \leq m} \left( \mathring{R}(\vp^+_{ij}, \vp^+_{ij}) + \mathring{R}(\psi^+_{ij}, \psi^+_{ij}) \right) + \sum_{i=1}^{2m} \mathring{R}(\theta_{i}, \theta_{i})   
\end{equation}
and
\begin{equation}\label{eq 8.11}
 -(m-1)(2m-1)\bar{\l}=   \sum_{1\leq i <j \leq m} \left( \mathring{R}(\vp^-_{ij}, \vp^-_{ij}) + \mathring{R}(\psi^-_{ij}, \psi^-_{ij}) \right) +\sum_{k=1}^{m-1} \mathring{R}(\eta_k,\eta_k). 
\end{equation}

We are ready to prove Theorem \ref{thm Kahler flat}. 
\begin{proof}[Proof of Theorem \ref{thm Kahler flat}]
(1). We will only prove the statement for $\mathring{R} \in \mathcal{C}(\a, B_{m,\a})$, as the case $-\mathring{R} \in \mathcal{C}(\a, B_{m,\a})$ differs only by flipping signs. 
Fix $p\in M$. Let $\{e_1, \cdots, e_m, Je_1, \cdots, Je_m\}$ be an orthonormal basis of $V=T_pM$.  

\textbf{Case 1:} $1\leq \a < m^2-1$. The assumption $\mathring{R} \in \mathcal{C}(\a, B_{m,\a})$ implies
\begin{eqnarray}\label{eq 8.2}
&& -\a B_{m,\a} \bar{\l} \\ \nonumber
    &\leq&   \l_1  + \cdots +\l_{\a} \\ \nonumber 
    &\leq & \frac{\a}{m^2-1} \left( \l_1 +  \cdots +\l_{m^2-1}\right) \\ \nonumber
    &\leq & \frac{\a}{m^2-1} \left(\sum_{1\leq i <j \leq m} \left( \mathring{R}(\vp^-_{ij}, \vp^-_{ij}) + \mathring{R}(\psi^-_{ij}, \psi^-_{ij}) \right) +\sum_{k=1}^{m-1} \mathring{R}(\eta_k,\eta_k) \right) \\ \nonumber
    &= & -\a B_{m,\a} \bar{\l}, 
\end{eqnarray}
where we have used \eqref{eq 8.11} in the last step.

Therefore, we must have equality in each inequality of \eqref{eq 8.2}. It follows that
\begin{equation*}
    \l_1 = \cdots =\l_{m^2-1} = -\frac{2m-1}{m+1} \bar{\l}. 
\end{equation*}
and $E^-$ is a subspace of the eigenspace associated with the eigenvalue $-\frac{2m-1}{m+1} \bar{\l}$ of $\mathring{R}$. 
Below we will show that this information is sufficient to conclude constant holomorphic sectional curvature at $p$. 

By \eqref{eq 8.10}, we have for any $1\leq i < j \leq m$,
\begin{equation}\label{eq 8.3}
    R(e_i, Je_i, e_j, Je_j) = \frac{2m-1}{m+1} \bar{\l}.
\end{equation}
Note that for any $1\leq i < j \leq m$, 
$$\xi :=\frac{1}{4} \left(e_i \odot e_i +Je_i\odot Je_i -e_j \odot e_j -Je_j \odot Je_j \right)$$
is a traceless symmetric two-tensor in $E^-$ with $|\xi|=1$. Therefore, we have 
\begin{equation}\label{eq 8.45}
 \mathring{R}(\xi,\xi)=-\frac{2m-1}{m+1} \bar{\l}.   
\end{equation}
A straightforward calculation shows 
\begin{equation}\label{eq 8.4}
     \mathring{R}(\xi,\xi) = -\frac{1}{2} R(e_i,Je_i,e_i,Je_i) -\frac{1}{2}R(e_j,Je_j,e_j,Je_j) +R(e_i,Je_i,e_j, Je_j), 
\end{equation}
Combining \eqref{eq 8.3}, \eqref{eq 8.45}, and \eqref{eq 8.4}, we arrive at 
\begin{equation*}
    R(e_i, Je_i, e_i, Je_i) + R(e_j,Je_j, e_j, Je_j)  =4 \frac{2m-1}{m+1} \bar{\l}
\end{equation*}
for any $1\leq i < j \leq m$. It follows that 
\begin{equation*}
R(e_i, Je_i, e_i, Je_i) =2 \frac{2m-1}{m+1} \bar{\l}
\end{equation*}
for all $1\leq i \leq m$. Since the orthonormal basis is arbitrary, this shows that $M$ has constant nonnegative holomorphic sectional curvature at $p$.

\textbf{Case 2:} $m^2-1 < \a < (2m-1)(m+1)$. 
Let $A$ be the collection of the values $\mathring{R}(\theta_i,\theta_i)$ for $1\leq i \leq 2m$, and $\mathring{R}(\vp^{+}_{ij},\vp^{+}_{ij})$ and $\mathring{R}(\psi^{+}_{ij},\psi^{+}_{ij})$ for $1\leq i < j \leq m$. 
By \eqref{eq 8.5}, $\bar{a}$, the average of all values in $A$, is given by
\begin{equation*}
    \bar{a}=\frac{2(2m-1)}{m+1}\bar{\l}. 
\end{equation*}
By Lemma \ref{lemma average}, we have 
\begin{equation}\label{eq 8.21}
    f(A, (\a-m^2+1)) \leq (\a-m^2+1)) \bar{a},
\end{equation}
where $f$ is the function defined in Lemma \ref{lemma average}. 
The condition $\mathring{R} \in \mathcal{C}(\a, B_{m,\a})$ implies
\begin{eqnarray}\label{eq 8.6}
&& -\a B_{m,\a} \bar{\l} \\ \nonumber
&\leq &  \l_1 + \cdots + \l_{\a}  \\ \nonumber 
& \leq & \sum_{1\leq i <j \leq m} \left( \mathring{R}(\vp^-_{ij}, \vp^-_{ij}) + \mathring{R}(\psi^-_{ij}, \psi^-_{ij}) \right) +\sum_{k=1}^{m-1} \mathring{R}(\eta_k,\eta_k) \\ \nonumber
&& +f(A, (\a-m^2+1)) \\ \nonumber
&\leq & -(m-1)(2m-1)\bar{\l}  +(\a-m^2+1)) \bar{a} \\ \nonumber
&=& -\a B_{m,\a} \bar{\l},
\end{eqnarray}
where we have used \eqref{eq 8.11} and \eqref{eq 8.21}. It follows that we must have equality in each inequality of \eqref{eq 8.6}. In particular, we have equality in \eqref{eq 8.21}, which happens only when all the values in $A$ are equal to $\bar{a}$. By \eqref{eq 8.13}, we have 
\begin{equation*}\label{eq R positive basis alpha}
    R(e_i,Je_i, e_i, Je_i) = \mathring{R}(\theta_{i}, \theta_{i}) = 2\frac{2m-1}{m+1}\bar{\l}
\end{equation*}
for all $1\leq i \leq m$. Since the orthonormal basis is arbitrary, we have proved that $(M,g)$ has constant nonnegative holomorphic sectional curvature at $p$. 

Finally, the conclusion that $(M,g)$ has constant nonnegative holomorphic sectional curvature follows from Schur's lemma for K\"ahler manifolds (see for instance \cite[Theorem 7.5]{KN69}).

(2). If we assume $\mathring{R} \in \mathcal{C}(\a, \theta)$, then  \eqref{eq 8.2} and \eqref{eq 8.6} in the proof of part (1) would become
\begin{equation*}
    -\a \theta \bar{\l} \leq -\a B_{m,\a} \bar{\l}.
\end{equation*}
This works for $\a=m^2-1$ as well. 

If $\theta < B_{m,\a}$, then $\bar{\l}=0$. By Proposition \ref{prop basic cones}, $(M,g)$ must be flat. 

\end{proof}

\section*{Acknowledgments}
The author would like to thank Prof. Matt Gursky for helpful discussions that initiated the consideration of the curvature cones $\mathcal{C}(\a,\theta)$ in this paper. He also thanks the referee for valuable feedback which improved the readability of the paper.

\bibliographystyle{alpha}
\bibliography{ref}

\end{document}